\let\mathbb=\varmathbb
\theoremstyle{plain}
\newtheorem{theorem}{Theorem}[section]
\newtheorem{proposition}[theorem]{Proposition}
\newtheorem{lemma}[theorem]{Lemma}
\newtheorem{corollary}[theorem]{Corollary}
\newtheorem{claim}{Claim}[theorem]
\newtheorem{definition}[theorem]{Definition}
\newtheorem{remark}[theorem]{Remark}
\theoremstyle{nonumberplain}
\newtheorem{proof}{Proof}
\newtheorem{claimproof}{Proof}
\newcommand{\id}{\ensuremath{\text{{\rm id}}}}
\newcommand{\Ord}{\ensuremath{\text{{\rm Ord}}}}
\newcommand{\ZFC}{\ensuremath{\text{{\sf ZFC}}}}
\def\forces{\mathrel\|\joinrel\relbar}
\DeclareMathOperator{\rng}{rng}
\DeclareMathOperator{\otp}{otp}
\DeclareMathOperator{\Lim}{Lim}
\newcommand{\LimNoArg}{\ensuremath{\text{{\rm Lim}}}}
\DeclareMathOperator{\cf}{cf}
\DeclareMathOperator{\cof}{cof}
\newcommand{\PFA}{\ensuremath{\text{{\sf PFA}}}}
\newcommand{\Cl}{\ensuremath{\text{{\rm Cl}}}}
\newcommand{\TP}{\ensuremath{\text{{\sf TP}}}}
\newcommand{\SP}{\ensuremath{\text{{\sf SP}}}}
\newcommand{\ITP}{\ensuremath{\text{{\sf ITP}}}}
\newcommand{\ISP}{\ensuremath{\text{{\sf ISP}}}}
\newcommand{\non}{\ensuremath{\lnot}}
\newcommand{\IT}{\ensuremath{\text{{\rm IT}}}}
\newcommand{\IS}{\ensuremath{\text{{\rm IS}}}}
\journal{Annals of Pure and Applied Logic}
\begin{document}

\begin{frontmatter}

\title{The combinatorial essence of supercompactness\tnoteref{t1}}
\tnotetext[t1]{The results of this paper are from the author's doctoral dissertation~\cite{diss} written at the Ludwig Maximilians Universit\"at M\"unchen
under the supervision of Professor Dieter Donder, to whom the author feels greatly indebted.}
\author{Christoph Wei\ss}
\ead{weissch@ma.tum.de}
\address{Mathematisches Institut der Universit\"at M\"unchen, Theresienstr.~39, 80333 M\"unchen, Germany}

\begin{abstract}
We introduce combinatorial principles that characterize strong compactness and supercompactness for inaccessible cardinals but also make sense for successor cardinals.
Their consistency is established from what is supposedly optimal.
Utilizing the failure of a weak version of square, we show that the best currently known lower bounds for the consistency strength of these principles can be applied.
\end{abstract}

\begin{keyword}
Ineffable \sep Slender \sep Strongly Compact \sep Supercompact \sep Thin
\MSC 03E05 \sep 03E35 \sep 03E55 \sep 03E65
\end{keyword}

\end{frontmatter}

\section{Introduction}
It is a well-known theorem that a cardinal $\kappa$ is weakly compact if and only if it is inaccessible and the $\kappa$-tree property holds, that is, there are no $\kappa$-Aronszajn trees.
By~\cite{mitchell}, the $\omega_2$-tree property can be forced from a weakly compact cardinal and implies $\omega_2$ is weakly compact in $L$.
The tree property thus captures the combinatorial essence of weak compactness, even for successor cardinals.
Similarly, the property that there is no special $\kappa$-Aronszajn tree captures the essence of Mahlo, see~\cite[(1.9)]{todorcevic.partitioning}.

In the present work, we introduce principles $\TP(\kappa, \lambda)$ and $\SP(\kappa, \lambda)$ as well as $\ITP(\kappa, \lambda)$ and $\ISP(\kappa, \lambda)$ that achieve the same for strong compactness and supercompactness respectively.
We present the ideals associated to the principles $\ITP(\kappa, \lambda)$ and $\ISP(\kappa, \lambda)$, prove the consistency of $\ISP(\omega_2, \lambda)$, the strongest of the principles, from a $\lambda^{<\kappa}$-ineffable cardinal, and show $\ITP(\kappa, \lambda)$ implies the failure of a weak form of square, giving lower bounds on its consistency strength.

\subsection*{Notation}

The notation used is mostly standard.
$\Ord$ denotes the class of all ordinals.
For $A \subset \Ord$, $\Lim A$ denotes the class of limit points of $A$.
$\LimNoArg$ stands for $\Lim \Ord$.
If $a$ is a set of ordinals, $\otp a$ denotes the order type of $a$.
For a regular cardinal $\delta$, $\cof \delta$ denotes the class of all ordinals of cofinality $\delta$,
and $\cof(< \delta)$ denotes those of cofinality less than $\delta$.

For forcings, we write $p < q$ to mean $p$ is stronger than $q$.
Names either carry a dot above them or are canonical names for elements of $V$, so that we can confuse sets in the ground model with their names.

The phrases \emph{for large enough $\theta$} and \emph{for sufficiently large $\theta$} will be used for saying that there exists a $\theta'$ such that the sentence's proposition holds for all $\theta \geq \theta'$.

If $\kappa \subset X$, then
\begin{equation*}
	P'_\kappa X \coloneqq \{ x \in P_\kappa X\ |\ \kappa \cap x \in \Ord,\ \langle x, \in \rangle \prec \langle X, \in \rangle \}
\end{equation*}
is club.
For $x \in P'_\kappa X$ we set $\kappa_x \coloneqq \kappa \cap x$.
For $f: P_\omega X \to P_\kappa X$ let $\Cl_f \coloneqq \{ x \in P_\kappa X\ |\ \forall z \in P_\omega x\ f(z) \subset x \}$.
$\Cl_f$ is club, and it is well known that for any club $C \subset P_\kappa X$ there is an $f: P_\omega X \to P_\kappa X$ such that $\Cl_f \subset C$.

If $X \subset X'$, $R \subset P_\kappa X$, $U \subset P_\kappa X'$,
then the projection of $U$ to $X$ is $U \restriction X \coloneqq \{ u \cap X\ |\ u \in U \} \subset P_\kappa X$ and the lift of $R$ to $X'$ is $R^{X'} \coloneqq \{ x' \in P_\kappa X'\ |\ x' \cap X \in R \} \subset P_\kappa X'$.

For sections~\ref{sect.principles},~\ref{sect.ideals}, and~\ref{sect.weak_square}, $\kappa$ and $\lambda$ are assumed to be cardinals, $\kappa \leq \lambda$, and $\kappa$ is regular and uncountable.

\section{Combinatorial principles for strong compactness and supercompactness}\label{sect.principles}
Let us call a sequence $\langle d_a\ |\ a \in P_\kappa \lambda \rangle$ a \emph{$P_\kappa \lambda$-list} if $d_a \subset a$ for all $a \in P_\kappa \lambda$.

\begin{definition}\label{def.P_kappa_lambda.thin}
	Let $D = \langle d_a\ |\ a \in P_\kappa \lambda \rangle$ be a $P_\kappa \lambda$-list.
	\begin{itemize}
		\item $D$ is called \emph{thin} if there is a club $C \subset P_\kappa \lambda$ such that $| \{ d_a \cap c\ |\ c \subset a \in P_\kappa \lambda \} | < \kappa$ for every $c \in C$.
		\item $D$ is called \emph{slender} if for every sufficiently large $\theta$ there is a club $C \subset P_\kappa H_\theta$ such that $d_{M \cap \lambda} \cap b \in M$ for all $M \in C$ and all $b \in M \cap P_{\omega_1} \lambda$.\footnote{Note that this definition is slightly weaker than the one from \cite{diss} as ``for all $b \in M \cap P_\kappa \lambda$'' was replaced by ``for all $b \in M \cap P_{\omega_1} \lambda$.''  However, the proofs in \cite{diss} work for this weaker definition and the resulting stronger principle \ISP\ just the same.}
	\end{itemize}
\end{definition}

\begin{proposition}\label{prop.P_kappa_lambda-thin->slender}
	Let $D$ be a $P_\kappa \lambda$-list.
	If $D$ is thin, then it is slender.
\end{proposition}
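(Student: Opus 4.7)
The plan is to produce, for every sufficiently large $\theta$, a witnessing club for the slender condition. Let $C_0 \subset P_\kappa \lambda$ be a club witnessing that $D$ is thin, and pick $\theta$ with $D, C_0 \in H_\theta$. I would let $C \subset P_\kappa H_\theta$ be the club consisting of those $M \in P'_\kappa H_\theta$ with $D, C_0 \in M$ and $M \cap \lambda \in C_0$; this is club because the lift $C_0^{H_\theta}$ is. Fix $M \in C$, set $a = M \cap \lambda$, and let $b \in M \cap P_{\omega_1} \lambda$; the goal is $d_a \cap b \in M$.

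From $\omega \in M \prec H_\theta$ one gets $\omega \subset M$, so any bijection $\omega \to b$ in $M$ (produced by elementarity) shows $b \subset M$, hence $b \subset a$. The main step, which I regard as the principal obstacle, is to find a $c \in C_0 \cap M$ with $b \subset c \subset M$. I would exploit elementarity to name inside $M$ the minimum cardinality $\mu$ of a $c \in C_0$ extending $b$; since $\mu < \kappa$ and $\mu \in M$, we get $\mu \in M \cap \kappa = \kappa_M$, which forces $\mu < \kappa_M$ and $\mu \subset \kappa_M \subset M$. A bijection $\mu \to c$ in $M$ then has every image in $M$, so $c \subset M$, and in particular $c \subset a$.

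With $c$ fixed I invoke thinness at $c$: the set $X_c = \{d_{a'} \cap c \mid c \subset a' \in P_\kappa \lambda\}$ has $|X_c| < \kappa$ and belongs to $M$, being definable from the parameters $c, D \in M$. The same ``cardinal-inside-$M$-is-below-$\kappa_M$'' observation applied to $|X_c|$ gives $X_c \subset M$ by yet another bijection in $M$. Because $c \subset a$, we have $d_a \cap c \in X_c \subset M$, and therefore $d_a \cap b = (d_a \cap c) \cap b \in M$, as required. The slightly counterintuitive fact powering both applications of the bijection trick is that whenever a cardinal below $\kappa$ happens to lie in $M$, it is automatically below $\kappa_M$; the rest of the argument is routine bookkeeping with elementarity.
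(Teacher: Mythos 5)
Your proof is correct and follows essentially the same route as the paper's: cover $b$ by some $c$ in the thinness-witnessing club with $c \in M$, observe that the fewer than $\kappa$ many traces $d_{a'} \cap c$ all lie in $M$, and intersect with $b \in M$. The only difference is presentational --- the paper builds the needed covering and containment properties directly into the definition of the witnessing club $\bar{C}$, whereas you derive them from elementarity of $M \in P'_\kappa H_\theta$ via the observation that a cardinal of $M$ below $\kappa$ is below $\kappa_M$ and hence contained in $M$; both are fine.
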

\begin{proof}
	Let $C \subset P_\kappa \lambda$ be a club that witnesses $D = \langle d_a\ |\ a \in P_\kappa \lambda \rangle$ is thin.
	Define $g: C \to P_\kappa H_\theta$ by $g(c) \coloneqq	\{ d_a \cap c\ |\ c \subset a \in P_\kappa \lambda \}$.
	Let $\bar{C} \coloneqq \{ M \in C^{H_\theta}\ |\ \forall b \in M \cap P_\kappa \lambda\ \exists c \in M \cap C\ b \subset c,\ \forall c \in M \cap C\ g(c) \subset M \}$.
	Then $\bar{C}$ is club.
	Let $M \in \bar{C}$ and $b \in M \cap P_{\omega_1} \lambda$.
	Then there is $c \in M \cap C$ such that $b \subset c$, so $d_{M \cap \lambda} \cap b = d_{M \cap \lambda} \cap c \cap b \in M$ as $d_{M \cap \lambda} \cap c \in g(c) \subset M$.
	Therefore $\bar{C}$ witnesses $\langle d_a\ |\ a \in P_\kappa \lambda \rangle$ is slender.
\end{proof}

\begin{definition}\label{def.ineffable_branch}
	Let $D = \langle d_a\ |\ a \in P_\kappa \lambda \rangle$ be a $P_\kappa \lambda$-list and $d \subset \lambda$.
	\begin{itemize}
		\item $d$ is called a \emph{cofinal branch of $D$} if for all $a \in P_\kappa \lambda$ there is $z_a \in P_\kappa \lambda$ such that $a \subset z_a$ and $d \cap a = d_{z_a} \cap a$.
		\item $d$ is called an \emph{ineffable branch of $D$} if there is a stationary set $S \subset P_\kappa \lambda$ such that $d \cap a = d_a$ for all $a \in S$.
	\end{itemize}
\end{definition}

Combining these two definitions, we can define the following four combinatorial principles.
\begin{definition}
	\begin{itemize}
		\item $\TP(\kappa, \lambda)$ holds if every thin $P_\kappa \lambda$-list has a cofinal branch.
		\item $\SP(\kappa, \lambda)$ holds if every slender $P_\kappa \lambda$-list has a cofinal branch.
		\item $\ITP(\kappa, \lambda)$ holds if every thin $P_\kappa \lambda$-list has an ineffable branch.
		\item $\ISP(\kappa, \lambda)$ holds if every slender $P_\kappa \lambda$-list has an ineffable branch.
	\end{itemize}
\end{definition}

\begin{remark}\label{remark.inaccessible->thin}
	The reader should note that the principle $\TP(\kappa, \kappa)$ is just the tree property for $\kappa$.
	Also, if $\kappa$ is an inaccessible cardinal, then every $P_\kappa \lambda$-list is thin.
	Therefore $\TP(\kappa, \lambda)$ and $\SP(\kappa, \lambda)$ as well as $\ITP(\kappa, \lambda)$ and $\ISP(\kappa, \lambda)$ are equivalent if $\kappa$ is inaccessible.
	Furthermore this means an inaccessible cardinal $\kappa$ is weakly compact if and only if $\TP(\kappa, \kappa)$ holds, and it is ineffable if and only if $\ITP(\kappa, \kappa)$ holds.
\end{remark}

\begin{remark} The following implications hold.
	\begin{enumerate}
		\item $\ISP(\kappa, \lambda)$ implies $\SP(\kappa, \lambda)$,\label{en.ISP->SP}
		\item $\ISP(\kappa, \lambda)$ implies $\ITP(\kappa, \lambda)$,\label{en.ISP->ITP}
		\item $\ITP(\kappa,\lambda)$ implies $\TP(\kappa,\lambda)$,\label{en.ITP->TP}
		\item $\SP(\kappa,\lambda)$ implies $\TP(\kappa,\lambda)$.\label{en.SP->TP}
	\end{enumerate}
	We will see that~\ref{en.ISP->SP} and~\ref{en.ITP->TP} can not be reversed.
	For if $\kappa$ is a strongly compact cardinal that is not supercompact, then by Theorem~\ref{theorem.TP<->stronglycompact} $\SP(\kappa, \lambda)$ holds for all $\lambda \geq \kappa$, but by Theorem~\ref{theorem.ITP<->supercompact} we have that $\ITP(\kappa, \lambda)$ cannot hold for all $\lambda \geq \kappa$.
	This is also true for smaller $\kappa$.
	One can show that the Mitchell collapse preserves $\SP(\kappa, \lambda)$.
	However, by Theorem~\ref{theorem.ITP_consistency_additional}, if the Mitchell collapse produces a model in which $\ITP(\kappa, \lambda)$ holds, then also in the ground model $\ITP(\kappa, \lambda)$ holds, so that again collapsing a strongly compact cardinal that is not supercompact yields a model in which $\SP(\kappa, \lambda)$ holds but $\ITP(\kappa, \lambda)$ fails.
	Furthermore implication~\ref{en.ISP->ITP} can not be reversed.
	This follows from the fact that the forcing axiom $\PFA(\Gamma_\Sigma)$ from~\cite{koenig.forcing_indestructibility} can be seen to imply $\ITP(\omega_2, \lambda)$ for all $\lambda \geq \omega_2$.
	The paper also shows that $\PFA(\Gamma_\Sigma)$ is consistent with the approachability property holding for $\omega_1$.
	It is easily seen that this contradicts $\ISP(\omega_2, \omega_2)$, so that in any model of $\PFA(\Gamma_\Sigma)$ + ``the approachability property holds for $\omega_1$'' $\ITP(\omega_2, \lambda)$ holds for all $\lambda \geq \omega_2$ but $\ISP(\omega_2, \omega_2)$ fails.
\end{remark}

Jech~\cite{jech.combinatorial_problems} was the first to consider generalizations of the concept of a tree to $P_\kappa \lambda$-lists.
He gave the following characterization of strong compactness.
\begin{theorem}\label{theorem.jech}
	The following are equivalent.
	\begin{enumerate}
		\item $\kappa$ is strongly compact.
		\item For every $\lambda\geq\kappa$, every $P_\kappa \lambda$-list has a branch.
	\end{enumerate}
\end{theorem}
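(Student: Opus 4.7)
The plan is to show both directions separately; throughout, ``branch'' means ``cofinal branch'' in the sense of Definition~\ref{def.ineffable_branch}. For $(1)\Rightarrow(2)$, fix $\lambda\geq\kappa$ and a fine, $\kappa$-complete ultrafilter $U$ on $P_\kappa\lambda$ (the existence of such $U$ for every $\lambda\geq\kappa$ is one of the standard equivalents of strong compactness). Given a $P_\kappa\lambda$-list $D=\langle d_a\ |\ a\in P_\kappa\lambda\rangle$, I would set
\[
d\coloneqq\{\alpha<\lambda\ |\ \{a\in P_\kappa\lambda\ |\ \alpha\in d_a\}\in U\}.
\]
To verify $d$ is a branch, fix $a_0\in P_\kappa\lambda$: by fineness each set $\{a\ |\ \gamma\in a\}$ with $\gamma\in a_0$ lies in $U$, and for each $\alpha\in a_0$ exactly one of $\{a\ |\ \alpha\in d_a\}$ and its complement lies in $U$. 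Since $|a_0|<\kappa$, the common intersection of these fewer-than-$\kappa$ many $U$-large sets is non-empty by $\kappa$-completeness, and any $z$ in it serves as $z_{a_0}$.

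For $(2)\Rightarrow(1)$, given $\lambda\geq\kappa$ I construct a fine, $\kappa$-complete ultrafilter on $P_\kappa\lambda$ from a branch of a list on a larger index set. Let $\nu\coloneqq|\mathcal{P}(P_\kappa\lambda)|$ and fix an injective enumeration $\mathcal{P}(P_\kappa\lambda)=\{A_\alpha\ |\ \alpha<\nu\}$; since $\nu\geq\lambda$ we identify $\lambda$ with an initial segment of $\nu$, so that for each $a\in P_\kappa\nu$ the set $a\cap\lambda$ lies in $P_\kappa\lambda$ and can serve as a ``test point''. Define
\[
d_a\coloneqq\{\alpha\in a\ |\ (a\cap\lambda)\in A_\alpha\}\subset a,
\]
obtaining a $P_\kappa\nu$-list which by hypothesis admits a branch $d\subset\nu$. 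Set $\mathcal{U}\coloneqq\{A_\alpha\ |\ \alpha\in d\}$.

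To verify $\mathcal{U}$ is a fine, $\kappa$-complete ultrafilter, I feed the branch condition small test sets $a_0\in P_\kappa\nu$: for fineness, $a_0=\{\gamma,\beta\}$ with $\gamma<\lambda$ and $A_\beta=\{a\ |\ \gamma\in a\}$; for the ultrafilter property, $a_0=\{\alpha,\beta\}$ with $A_\beta=P_\kappa\lambda\setminus A_\alpha$; for $\kappa$-completeness, $a_0=\{\alpha_i\ |\ i<\gamma\}\cup\{\beta\}$ where $A_\beta=\bigcap_i A_{\alpha_i}$ and $\gamma<\kappa$, which has size less than $\kappa$ by regularity of $\kappa$. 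In each case the branch witness $z\supseteq a_0$ together with the definition of $d_z$ forces the desired index into $d$.

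The main obstacle is identifying the correct list in the backward direction: the trick is to move to the index set $\nu=|\mathcal{P}(P_\kappa\lambda)|$ and let each $a\in P_\kappa\nu$ supply its own test point $a\cap\lambda$, so that a single $P_\kappa\nu$-list simultaneously encodes all potential ultrafilter decisions about subsets of $P_\kappa\lambda$. Once this list is in place, every ultrafilter axiom reduces to invoking the branch property on a test set $a_0$ whose size matches the degree of completeness involved.
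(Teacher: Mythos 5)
Your proof is correct, and it is essentially the standard argument: the paper itself gives no proof of Theorem~\ref{theorem.jech}, citing it to Jech, and your two directions (projecting a fine $\kappa$-complete ultrafilter on $P_\kappa\lambda$ to a cofinal branch, and conversely coding all of $\mathcal{P}(P_\kappa\lambda)$ into a single $P_\kappa\nu$-list whose branch reads off an ultrafilter) reproduce the classical proof. The only points worth polishing are explicitly checking $\emptyset\notin\mathcal{U}$ (take $a_0=\{\beta\}$ with $A_\beta=\emptyset$) and noting that the injectivity of the enumeration is what makes membership in $\mathcal{U}$ well defined; both are immediate from your setup.
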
 
Shortly after, Magidor~\cite{magidor.characterization_supercompact} extended Jech's result to supercompactness with the next theorem.
\begin{theorem}\label{theorem.magidor}
	The following are equivalent.
	\begin{enumerate}
		\item $\kappa$ is supercompact.
		\item For every $\lambda\geq\kappa$, every $P_\kappa \lambda$-list has an ineffable branch.
	\end{enumerate}
\end{theorem}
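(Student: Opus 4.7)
The plan is to prove both directions separately: for the forward direction I would use the ultrapower of a normal fine measure on $P_\kappa\lambda$ directly, and for the converse I would apply the hypothesis to a cleverly coded list on $P_\kappa(2^{\lambda^{<\kappa}})$ in the style of Magidor.

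\textbf{Forward direction.} Given a $P_\kappa\lambda$-list $D$, I would pick a normal fine measure $U$ on $P_\kappa\lambda$ with ultrapower $j : V \to M$, so that $j[\lambda] := \{ j(\alpha)\ |\ \alpha < \lambda \}$ belongs to $M$ and to $P_{j(\kappa)} j(\lambda)$ as computed in $M$. Setting $b := j(D)_{j[\lambda]} \subseteq j[\lambda]$ in $M$ and pulling it back to $d := \{ \alpha < \lambda\ |\ j(\alpha) \in b \}$, a direct computation gives $j(d) \cap j[\lambda] = b$, hence $j[\lambda] \in j(S)$ for $S := \{ a \in P_\kappa\lambda\ |\ d \cap a = d_a \}$, so $S \in U$. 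To see $S$ is stationary, given any club $C \subseteq P_\kappa\lambda$ witnessed by some $f : P_\omega\lambda \to P_\kappa\lambda$ with $\Cl_f \subseteq C$, I would observe that $j[\lambda]$ is closed under $j(f)$, so $j[\lambda] \in j(C)$ and therefore $C \cap S \neq \emptyset$. Thus $d$ is an ineffable branch.

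\textbf{Converse direction.} Given $\lambda \geq \kappa$, I would set $\mu := 2^{\lambda^{<\kappa}}$, fix a bijective enumeration $\langle A_\xi\ |\ \xi < \mu \rangle$ of $\mathcal{P}(P_\kappa\lambda)$, and define the $P_\kappa\mu$-list
\begin{equation*}
  d_a := \{ \xi \in a\ |\ a \cap \lambda \in A_\xi \}.
\end{equation*}
By hypothesis this list has an ineffable branch $d \subseteq \mu$ with stationary witness $S \subseteq P_\kappa\mu$. The candidate measure is $U := \{ A_\xi\ |\ \xi \in d \}$. To verify $U$ is a $\kappa$-complete, normal, fine ultrafilter on $P_\kappa\lambda$, I would use that for each $F \in P_\kappa\mu$ the set $\{ a\ |\ F \subseteq a \}$ is club, so $S$ still meets this club stationarily; then, given any specific collection of indices required by a particular ultrafilter axiom (the index of a complement, the index of a $<\kappa$-intersection, or the index of a $\lambda$-indexed diagonal intersection together with the indices it diagonalises over), I would absorb them into some $a \in S$ and read off the needed membership in $d$ directly from $d \cap a = d_a$.

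\textbf{Main obstacle.} The delicate step is the converse: the ineffable branch $d$ lives on $P_\kappa\mu$, while the ultrafilter $U$ must live on $P_\kappa\lambda$. Synchronising the two levels rests on the coding $a \mapsto a \cap \lambda$ and the projection principle that any $<\kappa$-sized collection of indices can be absorbed into a single $a \in S$. For normality one additionally needs $a \in S$ closed under the indexing map $\alpha \mapsto \xi_\alpha$, obtained by intersecting $S$ with the club of $a$'s closed under that function before picking $a$. Once this bookkeeping is in place each axiom for a normal fine $\kappa$-complete ultrafilter becomes a one-line check, whereas the forward direction is the routine supercompact ultrapower computation.
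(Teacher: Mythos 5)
The paper offers no proof of this theorem --- it is quoted from Magidor's paper \cite{magidor.characterization_supercompact} --- so there is nothing to compare against; judged on its own, your argument is correct and is essentially Magidor's original one. The forward direction is the standard ultrapower computation ($j[\lambda]\in j(S)$ gives $S\in U$, and membership in a normal fine measure implies stationarity via closure of $j[\lambda]$ under $j(f)$), and the converse correctly recovers a normal fine $\kappa$-complete ultrafilter on $P_\kappa\lambda$ from an ineffable branch of the coded list on $P_\kappa\mu$ with $\mu = 2^{\lambda^{<\kappa}}$. One caution: your phrase about absorbing ``the indices it diagonalises over'' into a single $a\in S$ cannot be taken literally for a $\lambda$-indexed diagonal intersection, since $|a|<\kappa\leq\lambda$; the repair you give in the final paragraph --- choosing $a\in S$ in the club of sets closed under $\alpha\mapsto\xi_\alpha$, so that $\xi_\alpha\in a$ for every $\alpha\in a\cap\lambda$ --- is exactly right, and it is also the one place where you genuinely need $S$ to be stationary rather than merely cofinal (cofinality alone would suffice for ultraness, $\kappa$-completeness, and fineness, which is in effect the content of Jech's Theorem~\ref{theorem.jech} versus Magidor's).
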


By Remark~\ref{remark.inaccessible->thin} we can rephrase Theorems~\ref{theorem.jech} and~\ref{theorem.magidor} in the following way.
\begin{theorem}\label{theorem.TP<->stronglycompact}
	Suppose $\kappa$ is inaccessible.
	Then $\kappa$ is strongly compact if and only if\/ $\TP(\kappa, \lambda)$ holds for every $\lambda \geq \kappa$.
\end{theorem}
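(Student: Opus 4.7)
The plan is to derive this directly by combining Jech's Theorem~\ref{theorem.jech} with the observation in Remark~\ref{remark.inaccessible->thin}. Since the theorem is explicitly advertised as a rephrasing of Jech's characterization using the language of thin lists, essentially no new mathematical content is required; the task is just to verify that the thinness assumption becomes vacuous under inaccessibility.

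For the forward direction, assume $\kappa$ is inaccessible and strongly compact, and fix $\lambda \geq \kappa$. Let $D = \langle d_a \mid a \in P_\kappa \lambda \rangle$ be a thin $P_\kappa \lambda$-list. In particular $D$ is a $P_\kappa\lambda$-list, so Theorem~\ref{theorem.jech} supplies a cofinal branch $d \subset \lambda$ of $D$. Thus $\TP(\kappa, \lambda)$ holds. (Here I am reading the word ``branch'' in the statement of Theorem~\ref{theorem.jech} in its standard sense, agreeing with Definition~\ref{def.ineffable_branch}.)

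For the reverse direction, assume $\kappa$ is inaccessible and $\TP(\kappa, \lambda)$ holds for every $\lambda \geq \kappa$. Fix $\lambda \geq \kappa$ and an arbitrary $P_\kappa \lambda$-list $D$. Here is the only step that uses inaccessibility: by Remark~\ref{remark.inaccessible->thin}, every $P_\kappa\lambda$-list is automatically thin, because for each $c \in P_\kappa \lambda$ the set $\{d_a \cap c \mid c \subset a\}$ is a subset of $P(c)$ which has cardinality $2^{|c|} < \kappa$ by inaccessibility; so $P_\kappa\lambda$ itself witnesses thinness. Hence $D$ is thin and, by the assumed $\TP(\kappa, \lambda)$, has a cofinal branch. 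This verifies the hypothesis of Theorem~\ref{theorem.jech}, yielding that $\kappa$ is strongly compact.

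Since both halves reduce immediately to previously stated results, there is no real obstacle; the only thing worth double-checking is the justification of the thinness fact in the remark, namely that $2^{<\kappa} = \kappa$ for inaccessible $\kappa$ makes the set of possible intersections $d_a \cap c$ have size less than $\kappa$ uniformly on any club (indeed on all of $P_\kappa \lambda$).
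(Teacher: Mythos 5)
Your proposal is correct and matches the paper's own treatment: the paper derives Theorem~\ref{theorem.TP<->stronglycompact} as an immediate rephrasing of Jech's Theorem~\ref{theorem.jech} via Remark~\ref{remark.inaccessible->thin}, exactly as you do, and your justification that inaccessibility makes every $P_\kappa\lambda$-list thin (since $\{d_a \cap c \mid c \subset a\} \subset P(c)$ has size at most $2^{|c|} < \kappa$) is the intended content of that remark.
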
 
\begin{theorem}\label{theorem.ITP<->supercompact}
	Suppose $\kappa$ is inaccessible.
	Then $\kappa$ is supercompact if and only if\/ $\ITP(\kappa, \lambda)$ holds for every $\lambda \geq \kappa$.
\end{theorem}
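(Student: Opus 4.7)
The plan is essentially to combine Magidor's Theorem~\ref{theorem.magidor} with the observation in Remark~\ref{remark.inaccessible->thin} that inaccessibility of $\kappa$ forces every $P_\kappa \lambda$-list to be automatically thin. Under this hypothesis, the quantifier ``every thin $P_\kappa \lambda$-list'' appearing in $\ITP(\kappa, \lambda)$ collapses to ``every $P_\kappa \lambda$-list,'' which is precisely the quantifier on lists in clause~(2) of Theorem~\ref{theorem.magidor}.

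For the forward direction, I would assume $\kappa$ is supercompact. By Theorem~\ref{theorem.magidor}, for every $\lambda \geq \kappa$ every $P_\kappa \lambda$-list has an ineffable branch; in particular, every thin $P_\kappa \lambda$-list does, so $\ITP(\kappa, \lambda)$ holds for every such $\lambda$.

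For the reverse direction, I would assume $\kappa$ is inaccessible and that $\ITP(\kappa, \lambda)$ holds for every $\lambda \geq \kappa$. Fix $\lambda \geq \kappa$ and an arbitrary $P_\kappa \lambda$-list $D$. By Remark~\ref{remark.inaccessible->thin}, inaccessibility of $\kappa$ implies $D$ is thin, so the assumed instance of $\ITP(\kappa, \lambda)$ produces an ineffable branch through $D$. Since $\lambda$ and $D$ were arbitrary, clause~(2) of Theorem~\ref{theorem.magidor} holds, and therefore $\kappa$ is supercompact.

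There is no real obstacle here: the entire content of the theorem is that, once one has the equivalence in Remark~\ref{remark.inaccessible->thin} between ``thin $P_\kappa \lambda$-list'' and ``$P_\kappa \lambda$-list'' under inaccessibility, the principle $\ITP(\kappa, \lambda)$ coincides word-for-word with the combinatorial characterization of supercompactness given by Magidor. All the genuine mathematical work lives inside those two cited results.
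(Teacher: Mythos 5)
Your argument is correct and is exactly the paper's own justification: the theorem is stated as a rephrasing of Magidor's Theorem~\ref{theorem.magidor} via Remark~\ref{remark.inaccessible->thin}, which notes that inaccessibility of $\kappa$ makes every $P_\kappa \lambda$-list thin. Nothing further is needed.
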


The advantage of these new formulations is that $\TP(\kappa, \lambda)$ and $\ITP(\kappa, \lambda)$ are not limited to
inaccessible cardinals, as we will see in section~\ref{sect.consistency}.

\section{The corresponding ideals}\label{sect.ideals}
The principles $\ITP(\kappa, \lambda)$ and $\ISP(\kappa, \lambda)$ have ideals canonically associated to them.

\begin{definition}
	Let $A \subset P_\kappa \lambda$ and
	let $D = \langle d_a\ |\ a \in P_\kappa \lambda \rangle$ be a $P_\kappa \lambda$-list.
	$D$ is called \emph{$A$-effable} if for every $S \subset A$ that is stationary in $P_\kappa \lambda$ there are $a, b \in S$ such that $a \subset b$ and	$d_a \neq d_b \cap a$.
	$D$ is called \emph{effable} if it is $P_\kappa \lambda$-effable.
\end{definition}

\begin{definition}
	We let
	\begin{align*}
		I_\IT[\kappa, \lambda] & \coloneqq \{ A \subset P_\kappa \lambda\ |\ \text{there exists a thin $A$-effable $P_\kappa \lambda$-list} \},\\
		I_\IS[\kappa, \lambda] & \coloneqq \{ A \subset P_\kappa \lambda\ |\ \text{there exists a slender $A$-effable $P_\kappa \lambda$-list} \}.
	\end{align*}
	By $F_\IT[\kappa, \lambda]$ and $F_\IS[\kappa, \lambda]$ we denote the filters associated to $I_\IT[\kappa, \lambda]$
	and $I_\IS[\kappa, \lambda]$ respectively.
\end{definition}
Note that $\ITP(\kappa, \lambda)$ and $\ISP(\kappa, \lambda)$ now say that $I_\IT[\kappa, \lambda]$ and $I_\IS[\kappa, \lambda]$ are proper ideals respectively.
By Proposition~\ref{prop.P_kappa_lambda-thin->slender} we have $I_\IT[\kappa, \lambda] \subset I_\IS[\kappa, \lambda]$.

\begin{proposition}\label{prop.I_IT.normal}
	$I_\IT[\kappa, \lambda]$ and $I_\IS[\kappa, \lambda]$ are normal ideals on $P_\kappa \lambda$.
\end{proposition}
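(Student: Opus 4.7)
The plan is to verify three things for each ideal: closure under subsets, containment of the nonstationary ideal on $P_\kappa \lambda$, and normality (closure under diagonal unions indexed by $\lambda$, which given the preceding also forces $\kappa$-completeness). The first two are free: if $B \subset A \in I_{\IT}[\kappa, \lambda]$ is witnessed by a thin $A$-effable list, the same list works for $B$ because every stationary subset of $B$ is a stationary subset of $A$; and for nonstationary $A$ the list $d_a \coloneqq \emptyset$ is thin and vacuously $A$-effable. The same arguments apply verbatim to $I_{\IS}[\kappa, \lambda]$.

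For normality I would use its Fodor-style formulation: given an $I_{\IT}$-positive $S \subset P_\kappa \lambda$ and a regressive $f: S \to \lambda$, produce $\alpha_0 < \lambda$ so that $f^{-1}(\alpha_0) \cap S$ is still $I_{\IT}$-positive. Assuming for contradiction that $A_\alpha \coloneqq f^{-1}(\alpha) \cap S \in I_{\IT}[\kappa, \lambda]$ for every $\alpha$, witnessed by thin $A_\alpha$-effable lists $D^\alpha = \langle d^\alpha_a\rangle$ with thinness clubs $C^\alpha$, fix a pairing function $p : \lambda \times \lambda \to \lambda$ and combine the $D^\alpha$ into a single list by
\begin{equation*}
	d_a \coloneqq \{p(f(a), \xi) : \xi \in d^{f(a)}_a\} \cap a \text{ for } a \in S, \qquad d_a \coloneqq \emptyset \text{ for } a \notin S.
\end{equation*}
The design trick, and what preserves thinness, is to encode \emph{only} the single entry $d^{f(a)}_a$ at each $a$, using $f$ as a selector, rather than merging all $\lambda$ lists at once.

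For thinness, take $\bar C$ to be the club of $c \in \triangle_{\alpha < \lambda} C^\alpha$ closed under $p$ and $p^{-1}$. For $c \in \bar C$ and $c \subset a$, the set $d_a \cap c$ is either empty (when $f(a) \notin c$) or fully determined by the pair $(f(a), d^{f(a)}_a \cap c)$ with $f(a) \in c$, yielding at most $1 + \sum_{\alpha \in c} |\{d^\alpha_a \cap c : c \subset a\}| < \kappa$ possibilities (using thinness of each $D^\alpha$ for $\alpha \in c$ and regularity of $\kappa$). For $S$-effability, given stationary $S' \subset S$, Fodor on $P_\kappa \lambda$ produces $\alpha_0$ with $S'_0 \coloneqq \{a \in S' : f(a) = \alpha_0\}$ stationary, so $S'_0 \subset A_{\alpha_0}$ and $A_{\alpha_0}$-effability of $D^{\alpha_0}$ supplies $a \subset b$ in $S'_0$ (WLOG both closed under $p, p^{-1}$) with $d^{\alpha_0}_a \neq d^{\alpha_0}_b \cap a$; since $f(a) = f(b) = \alpha_0$, this discrepancy lifts through the pairing to $d_a \neq d_b \cap a$, contradicting the $I_{\IT}$-positivity of $S$.

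The slender case uses the same combined list: for $M$ in the diagonal intersection of the slenderness clubs on $P_\kappa H_\theta$ (with $\theta$ sufficiently large and $\langle D^\alpha, p, f\rangle \in M$), and for $b \in M \cap P_{\omega_1}\lambda$ (replaced without loss of generality by its countable closure under $p, p^{-1}$, which remains in $M \cap P_{\omega_1}\lambda$ by elementarity), one has $d_{M \cap \lambda} \cap b$ equal either to $\emptyset$ or to $\{p(\alpha_M, \xi) : \xi \in d^{\alpha_M}_{M \cap \lambda} \cap b\}$ where $\alpha_M \coloneqq f(M \cap \lambda) \in M$; by slenderness of the single list $D^{\alpha_M}$, $d^{\alpha_M}_{M \cap \lambda} \cap b \in M$, so the slice itself is in $M$. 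The hardest step, I expect, is recognizing that the Fodor reformulation is the right path: the naive simultaneous encoding of all $\lambda$ lists would force one to bound products $\prod_{\alpha \in c} |\{d^\alpha_a \cap c : c \subset a\}|$ which need not be $<\kappa$ for successor $\kappa$, whereas the selector-based encoding collapses this to a sum and makes regularity of $\kappa$ alone sufficient.
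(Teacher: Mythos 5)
Your proposal is correct and follows essentially the same route as the paper: both reduce normality to the pressing-down formulation, combine the fiberwise thin (resp.\ slender) effable lists into one via a pairing function tagged by the value of the regressive function, verify thinness on the diagonal intersection of the witnessing clubs using regularity of $\kappa$, and obtain effability by pressing down to a single fiber and pulling the discrepancy back through the pairing. The only differences are cosmetic (you make the trivial ideal axioms and the $p$-closure of the countable set $b$ explicit, and run the effability step in the contrapositive direction).
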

\begin{proof}
	Suppose $D \subset P_\kappa \lambda$ and $g: D \to \lambda$ is regressive.
	Set $A_\gamma \coloneqq {g^{-1}}'' \{ \gamma \}$.
	Let $f: \lambda \times \lambda \to \lambda$ be bijective, and define $f_{\alpha_1}: \lambda \to \lambda$ by $f_{\alpha_1} ( \alpha_0 ) \coloneqq f(\alpha_0, \alpha_1)$.
	We show that if $A_\gamma \in I_\IT[\kappa, \lambda]$ for all $\gamma < \lambda$, then $D \in I_\IT[\kappa, \lambda]$, and that if $A_\gamma \in I_\IS[\kappa, \lambda]$ for all $\gamma < \lambda$, then $D \in I_\IS[\kappa, \lambda]$.

	In the thin case, that is, if $A_\gamma \in I_\IT[\kappa, \lambda]$ for all $\gamma < \lambda$, let $\langle d^\gamma_a\ |\ a \in P_\kappa \lambda \rangle$ be a thin $A_\gamma$-effable $P_\kappa \lambda$-list for $\gamma < \lambda$.
	Let $C^\gamma \subset P_\kappa \lambda$ be a club witnessing $\langle d^\gamma_a\ |\ a \in P_\kappa \lambda \rangle$ is thin.
	Set $C \coloneqq \Delta_{\gamma < \lambda} C^\gamma$.
	We may assume that for all $a \in C$ and all $\alpha_0, \alpha_1 < \lambda$
	\begin{equation}\label{prop.I_IT.normal.eq1}
		f(\alpha_0, \alpha_1) \in a \leftrightarrow \alpha_0, \alpha_1 \in a.
	\end{equation}
	For $a \in C \cap D$ set
	\begin{equation*}
		d_a \coloneqq f_{g(a)}'' d^{g(a)}_a,
	\end{equation*}
	and set $d_a \coloneqq \emptyset$ for $a \in P_\kappa \lambda - ( C \cap D )$.
	If $c \in C$ and $a \in C \cap D$ are such that $c \subset a$ and $g(a) \notin c$, then
	\begin{equation}\label{prop.I_IT.normal.eq2}
		d_a \cap c = \emptyset.
	\end{equation}
	For if $g(a) \notin c$, then by~(\ref{prop.I_IT.normal.eq1}) we have $d_a \cap c = f_{g(a)}'' d^{g(a)}_a \cap c \subset \rng f_{g(a)} \cap c = \emptyset$.
	Thus for fixed $c \in C$ we have $\{ d_a \cap c\ |\ c \subset a \in C \cap D \} \subset \{ f_\gamma'' d^\gamma_a \cap c\ |\ \gamma \in c,\ c \subset a \in C \cap A_\gamma \} \cup \{ \emptyset \}$.
	For $\gamma \in c$ we have $c \in C^\gamma$ and thus, as $C^\gamma$ witnesses $\langle d^\gamma_a\ |\ a \in P_\kappa \lambda \rangle$ is thin, $| \{ d^\gamma_a \cap c\ |\ c \subset a \in C \cap A_\gamma \} | < \kappa$.
	Therefore $| \{ d_a \cap c\ |\ c \subset a \in P_\kappa \lambda\} | < \kappa$, which shows $\langle d_a\ |\ a \in P_\kappa \lambda \rangle$ is thin.

	If $A_\gamma \in I_\IS[\kappa, \lambda]$ for all $\gamma < \lambda$, let $\langle d^\gamma_a\ |\ a \in P_\kappa \lambda \rangle$ be a slender $A_\gamma$-effable $P_\kappa \lambda$-list for $\gamma < \lambda$.
	Let $C^\gamma \subset P'_\kappa H_\theta$ be a club witnessing $\langle d^\gamma_a\ |\ a \in P_\kappa \lambda \rangle$ is slender, where $\theta$ is some large enough cardinal.
	Set $C \coloneqq \Delta_{\gamma < \lambda} C^\gamma$.
	We can again assume that for all $M \in C$ and $\alpha_0, \alpha_1 < \lambda$ $f(\alpha_0, \alpha_1) \in M \leftrightarrow \alpha_0, \alpha_1 \in M$.
	In addition, we may require that
	\begin{equation}\label{prop.I_IT.normal.eq3}
		\langle M, \in, f \restriction (M \times M) \rangle \prec \langle H_\theta, \in, f \rangle
	\end{equation}
	for every $M \in C$.
	As above we define $d_a \coloneqq f_{g(a)}'' d^{g(a)}_a$ for $a \in (C \restriction \lambda) \cap D$ and let $d_a \coloneqq \emptyset$ otherwise.
	By the same argument that led to~(\ref{prop.I_IT.normal.eq2}), we have
	\begin{equation}\label{prop.I_IT.normal.eq4}
		d_a \cap b = \emptyset
	\end{equation}
	if $b \in P_\kappa \lambda$, $a \in (C \restriction \lambda) \cap D$, $b \subset a$, and $g(a) \notin b$.
	To show $\langle d_a\ |\ a \in P_\kappa \lambda \rangle$ is slender, let $M \in C$ and $b \in M \cap P_{\omega_1} \lambda$.
	Set $a \coloneqq M \cap \lambda$.
	If $M \notin D$, then $d_a \cap b \subset d_a = \emptyset \in M$, so assume $M \in D$.
	Then $d_a \cap b = f_{g(a)}'' d^{g(a)}_a \cap b = f_{g(a)}'' ( d^{g(a)}_a \cap {f^{-1}_{g(a)}}'' b )$.
	If $g(a) \notin b$, then by~(\ref{prop.I_IT.normal.eq4}) $d_a \cap b = \emptyset \in M$, so suppose $g(a) \in b$.
	Then ${f^{-1}_{g(a)}}'' b = b$, so by the slenderness of $\langle d^{g(a)}_{\tilde{a}}\ |\ \tilde{a} \in P_\kappa \lambda \rangle$ we have $d^{g(a)}_a \cap {f^{-1}_{g(a)}}'' b \in M$.
	Thus, as $g(a) \in b \subset M$, by~(\ref{prop.I_IT.normal.eq3}) $d_a \cap b = f_{g(a)}'' ( d^{g(a)}_a \cap {f^{-1}_{g(a)}}'' b ) \in M$.

	In both cases we arrived at a $P_\kappa \lambda$-list $\langle d_a\ |\ a \in P_\kappa \lambda \rangle$ such that for a club $C \subset P_\kappa \lambda$ that is closed under $f$ and $f^{-1}$ we have
	\begin{equation*}
		d_a = f_{g(a)}'' d^{g(a)}_a
	\end{equation*}
	for every $a \in C \cap D$, and $d_a = \emptyset$ for $a \in P_\kappa \lambda - (C \cap D)$.
	Suppose that $D \notin I_\IT[\kappa, \lambda]$ for the thin case or $D \notin I_\IS[\kappa, \lambda]$ for the slender case.
	Then there are $S \subset C \cap D$ stationary in $P_\kappa \lambda$ and $d \subset \lambda$ such that $d_a = d \cap a$ for all $a \in S$.
	Since $g$ is regressive we may assume $S \subset A_\gamma$ for some $\gamma < \lambda$.
	But then for $\tilde{d} \coloneqq {f^{-1}_\gamma}'' d$ and $a \in S$ it holds that
	\begin{equation*}
		d^\gamma_a
		= {f^{-1}_\gamma}'' f_\gamma'' d^\gamma_a
		= {f^{-1}_\gamma}'' d_a
		= {f^{-1}_\gamma}'' (d \cap a)
		= {f^{-1}_\gamma}'' d \cap {f^{-1}_\gamma}'' a
		= \tilde{d} \cap a,
	\end{equation*}
	contradicting $\langle d^\gamma_a\ |\ a \in P_\kappa \lambda \rangle$ being effable.
\end{proof}

It is standard to verify that if $\lambda < \lambda'$, then $I_\IT[\kappa, \lambda] \subset \{ A' \restriction \lambda\ |\ A' \in I_\IT[\kappa, \lambda'] \}$ and $I_\IS[\kappa, \lambda] \subset \{ A' \restriction \lambda\ |\ A' \in I_\IS[\kappa, \lambda'] \}$.
This implies the following proposition.
\begin{proposition}\label{prop.ITP_nach_unten}
	Suppose $\lambda \leq \lambda'$.
	Then $\ITP(\kappa, \lambda')$ implies $\ITP(\kappa, \lambda)$, and $\ISP(\kappa, \lambda')$ implies $\ISP(\kappa, \lambda)$.
\end{proposition}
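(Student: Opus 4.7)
The plan is a direct lifting argument. Given a thin (respectively, slender) $P_\kappa\lambda$-list $D = \langle d_a \mid a \in P_\kappa\lambda\rangle$, I would define a $P_\kappa\lambda'$-list $D' = \langle d'_{a'} \mid a' \in P_\kappa\lambda'\rangle$ by setting $d'_{a'} \coloneqq d_{a' \cap \lambda}$. If $\ITP(\kappa,\lambda')$ (resp. $\ISP(\kappa,\lambda')$) holds, $D'$ will have an ineffable branch $d'$, and I would show that $d \coloneqq d' \cap \lambda$ is an ineffable branch of $D$. So the proof splits into three tasks: $D'$ inherits thinness/slenderness from $D$; the ineffable branch drops down; and the stationary witness drops down.

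For the thin case, if $C \subset P_\kappa\lambda$ is a club witnessing thinness of $D$, then $C^{\lambda'}$ is a club in $P_\kappa\lambda'$, and for $c' \in C^{\lambda'}$ one has
\begin{equation*}
	\{ d'_{a'} \cap c' \mid c' \subset a' \in P_\kappa\lambda' \} = \{ d_a \cap (c' \cap \lambda) \mid c' \cap \lambda \subset a \in P_\kappa\lambda \},
\end{equation*}
which has size less than $\kappa$ since $c' \cap \lambda \in C$. For the slender case, fix $\theta$ large enough that slenderness of $D$ is witnessed by a club $C \subset P_\kappa H_\theta$, and pick $\theta' > \theta$. Let $\bar C' \coloneqq \{ M' \in P'_\kappa H_{\theta'} \mid \lambda \in M',\ M' \cap H_\theta \in C \}$, which is club. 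For $M' \in \bar C'$ and $b' \in M' \cap P_{\omega_1} \lambda'$, set $M \coloneqq M' \cap H_\theta \in C$ and $b \coloneqq b' \cap \lambda$; then $M \cap \lambda = M' \cap \lambda$ and $b \in M \cap P_{\omega_1}\lambda$, so
\begin{equation*}
	d'_{M' \cap \lambda'} \cap b' = d_{M \cap \lambda} \cap b \in M \subset M',
\end{equation*}
by slenderness of $D$.

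Now suppose $D'$ has an ineffable branch $d'$ witnessed by a stationary set $S' \subset P_\kappa\lambda'$. Let $d \coloneqq d' \cap \lambda$ and $S \coloneqq S' \restriction \lambda$. For $a' \in S'$, writing $a \coloneqq a' \cap \lambda$, we have
\begin{equation*}
	d \cap a = d' \cap a' \cap \lambda = d'_{a'} \cap \lambda = d_{a' \cap \lambda} = d_a,
\end{equation*}
using that $d'_{a'} \subset a' \cap \lambda \subset \lambda$. It remains to see that $S$ is stationary in $P_\kappa\lambda$: any club $E \subset P_\kappa\lambda$ lifts to a club $E^{\lambda'} \subset P_\kappa\lambda'$, so $S' \cap E^{\lambda'} \neq \emptyset$, which yields an element of $S \cap E$.

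There is no real obstacle; the argument is bookkeeping. The only mildly delicate step is the slender case, where one must juggle elementary submodels of $H_\theta$ and of $H_{\theta'}$ simultaneously and check that restricting $M'$ to $H_\theta$ preserves the relevant intersection with $\lambda$ — but this is automatic once $\lambda \in M'$ and $\lambda \in H_\theta$.
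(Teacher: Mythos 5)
Your argument is correct and is essentially the paper's own: the paper dispatches this proposition by citing the ``standard'' fact that $I_\IT[\kappa,\lambda]$ and $I_\IS[\kappa,\lambda]$ project from the corresponding ideals on $P_\kappa\lambda'$, and your lift-the-list/project-the-branch computation is exactly that verification in the special case $A = P_\kappa\lambda$. The details you supply (thinness and slenderness pass to the lifted list, the ineffable branch and its stationary witness restrict to $\lambda$) all check out.
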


It is easy to check $\cof \omega \cap \kappa \in I_\IT[\kappa, \kappa]$.
The following theorem is the two cardinal analog of this observation.
\begin{theorem}\label{theorem.omega_cofinal_effable}
	Suppose $\cf \lambda \geq \kappa$.
	Then
	\begin{equation*}
		\{ a \in P_\kappa \lambda\ |\ \Lim a \cap \cof \omega \subset a \} \in F_\IT [\kappa, \lambda].
	\end{equation*}
\end{theorem}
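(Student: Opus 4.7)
The plan is to observe that the set in question is itself a club of the form $\Cl_f$, so the result follows from the general fact that every club belongs to $F_\IT[\kappa, \lambda]$.

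First I would justify the general fact: given a non-stationary $A \subset P_\kappa \lambda$, the zero list $\langle \emptyset \mid a \in P_\kappa \lambda \rangle$ witnesses $A \in I_\IT[\kappa, \lambda]$, being trivially thin and vacuously $A$-effable because $A$ has no stationary subset. Dually, every club lies in $F_\IT[\kappa, \lambda]$.

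Next I would define $f \colon P_\omega \lambda \to P_\kappa \lambda$ by $f(z) = \{\sup z\}$ when $|z| = \omega$ and $f(z) = \emptyset$ otherwise. The hypothesis $\cf \lambda \geq \kappa > \omega$ ensures that $\sup z < \lambda$ whenever $|z| = \omega$, so $f$ is well defined. I claim $\Cl_f = \{a \in P_\kappa \lambda \mid \Lim a \cap \cof \omega \subset a\}$: for the forward inclusion, if $a \in \Cl_f$ and $\beta \in \Lim a \cap \cof \omega$, pick $z \subset a \cap \beta$ of order type $\omega$ cofinal in $\beta$, so $\{\beta\} = f(z) \subset a$ by membership in $\Cl_f$; for the reverse, given $a$ satisfying the closure condition and a countably infinite $z \in P_\omega a$, either $z$ has a maximum (whence $\sup z \in z \subset a$) or $\sup z$ is an $\omega$-cofinal limit of $a$ and hence in $a$.

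Since $\Cl_f$ is club and coincides with the set in question, the set lies in $F_\IT[\kappa, \lambda]$. The proof involves no substantial combinatorial obstacle; the only care needed is in choosing $f$, and the hypothesis $\cf \lambda \geq \kappa$ is used precisely to keep $f$ inside $P_\kappa \lambda$.
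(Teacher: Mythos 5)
Your reduction to ``every club belongs to $F_\IT[\kappa,\lambda]$'' is sound as far as it goes: for non-stationary $A$ the zero list is thin and vacuously $A$-effable, so the club filter is contained in $F_\IT[\kappa,\lambda]$. The gap is that the set in question is \emph{not} a club, so this reduction does not apply. The identification with a $\Cl_f$ fails because in this paper (and in the standard characterization of the club filter on $P_\kappa\lambda$) the sets $\Cl_f$ are guaranteed to be club only for $f$ defined on \emph{finite} subsets; your $f$ acts nontrivially only on countably infinite arguments, and the set of closure points of an $\omega$-ary function need not be closed under increasing unions of countable cofinality. Concretely, take $\kappa=\lambda=\omega_1$ and
\begin{equation*}
	a_n = \omega \cup \{\omega\} \cup \{\omega\cdot k\ |\ k \le n\}.
\end{equation*}
Each $a_n$ contains its only $\omega$-cofinal limit point, namely $\omega$, but the union $a=\bigcup_{n<\omega} a_n$ has $\omega^2$ as an $\omega$-cofinal limit point and $\omega^2\notin a$. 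So $\{ a \in P_\kappa \lambda\ |\ \Lim a \cap \cof \omega \subset a \}$ is not closed, hence not a club; the theorem has content precisely because $F_\IT[\kappa,\lambda]$ is strictly finer than the club filter on this set.

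What is actually needed is a thin $A$-effable list on the complementary set $A=\{a\ |\ \exists \eta \in \Lim a - a,\ \cf\eta=\omega\}$, and the paper constructs one: for each $\delta$ of cofinality $\omega$ fix an enumeration of the cofinal $\omega$-sequences in $\delta$, and let $d_a$ be the trace on $a$ of the least-indexed sequence in $\eta_a$ that $a$ meets cofinally. The minimality of the index is what makes the list thin, and the hypothesis $\cf\lambda\ge\kappa$ is used in the effability argument --- to find, above any $a$ in a putative homogeneous cofinal set, some $b$ in that set with $a\cup\Lim a\subset b$, which forces $d_b\cap a$ to be finite and contradicts $d_b\cap a=d_a$. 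So the hypothesis does real combinatorial work, rather than merely keeping a closure function inside $P_\kappa\lambda$; you would need to supply a construction of this kind.
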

\begin{proof}
	Let $A \coloneqq \{ a \in P_\kappa \lambda\ |\ \exists \eta_a \in \Lim a - a\ \cf \eta_a = \omega \}$ and for $a \in A$ let $\eta_a$ be a witness.
	For $\delta \in \cof \omega \cap \lambda$ let $\langle d^\delta_\nu\ |\ \nu < \tau_\delta \rangle$ be an enumeration of $\{ d \subset \delta\ |\ \otp d = \omega,\ \sup d = \delta \}$.
	For $a \in P_\kappa \lambda$ and $\delta \in \Lim a \cap \cof \omega$ let
	\begin{equation*}
		\nu^\delta_a \coloneqq \min \{ \nu < \tau_\delta\ |\ \sup(d^\delta_\nu \cap a) = \delta \}.
	\end{equation*}
	For $a \in A$ set
	\begin{equation*}
		d_a \coloneqq d^{\eta_a}_{\nu^{\eta_a}_a} \cap a,
	\end{equation*}
	and for $a \in P_\kappa \lambda - A$ let $d_a \coloneqq \emptyset$.

	Then $\langle d_a\ |\ a \in P_\kappa \lambda \rangle$ is $A$-effable, for suppose there were a cofinal $U \subset A$ and a $d \subset \lambda$ such that $d_a = d \cap a$ for all $a \in U$.
	Let $a \in U$.
	Since $\cf \lambda \geq \kappa$ there exists $b \in U$ such that $a \cup \Lim a \subset b$.
	But then $\otp (d_b \cap a) < \omega$, contradicting $d_b \cap a = d_a$.

	$\langle d_a\ |\ a \in P_\kappa \lambda \rangle$ is also thin, for let $a \in P_\kappa \lambda$.
	Let
	\begin{equation*}
		B_a \coloneqq \{ d^\delta_{\nu^\delta_a} \cap a\ |\ \delta \in \Lim a \cap \cof \omega \} \cup P_\omega a.
	\end{equation*}
	Then $|B_a| < \kappa$.
	Let $b \in A$ with $a \subset b$, and suppose $d_b \cap a \notin P_\omega a$.
	Since $a \subset b$, we have $\nu^\delta_b \leq \nu^\delta_a$ for all $\delta \in \Lim a \cap \cof \omega$.
	Because $|d_b \cap a| = \omega$ we also have that $d^{\eta_b}_{\nu^{\eta_b}_b} \cap a = d_b \cap a$ is unbounded in $\eta_b$.
	Therefore $\nu^{\eta_b}_a \leq \nu^{\eta_b}_b$, so that $\nu^{\eta_b}_a = \nu^{\eta_b}_b$.
	But this means $d_b \cap a = d^{\eta_b}_{\nu^{\eta_b}_a} \cap a \in B_a$.
\end{proof}

When $\kappa$ is inaccessible, the filter $F_\IT[\kappa, \lambda]$ has some additional simple but helpful properties.
These will be used in section~\ref{sect.consistency}.
\begin{proposition}\label{prop.I_IT_inaccessible}
	Let $\kappa$ be inaccessible.
	Then
	\begin{equation*}
		\{ a \in P'_\kappa \lambda\ |\ \text{$\kappa_a$ inaccessible} \} \in F_\IT [\kappa, \lambda].
	\end{equation*}
\end{proposition}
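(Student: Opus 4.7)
The plan is to show that the complement $B := \{ a \in P'_\kappa \lambda \mid \kappa_a \text{ is not inaccessible} \}$ lies in $I_\IT[\kappa, \lambda]$, which yields the proposition because $P'_\kappa \lambda$ is club and hence in $F_\IT[\kappa, \lambda]$. I would split $B$ according to which clause of inaccessibility fails: let $B_1 := \{ a \in B \mid \cf \kappa_a < \kappa_a \}$, which handles every $a$ for which $\kappa_a$ is not a regular cardinal (in particular, $\kappa_a$ not a cardinal), and let $B_2 := \{ a \in B \mid \kappa_a \text{ is an uncountable regular cardinal but not a strong limit} \}$. The degenerate set $\{ a \mid \kappa_a \leq \omega \}$ is avoided by the club $\{ a \mid \omega + 1 \subseteq a \}$, hence nonstationary, hence trivially in $I_\IT[\kappa, \lambda]$ (via the empty list). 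In both $B_1$ and $B_2$ I then invoke normality of $I_\IT$ (Proposition~\ref{prop.I_IT.normal}) via an appropriately chosen regressive function.

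For $B_2$ I would define $g_2(a) := \min \{ \mu < \kappa_a \mid 2^\mu \geq \kappa_a \}$, which is regressive since $g_2(a) < \kappa_a \subseteq a$. Each fiber $g_2^{-1} \{ \mu \}$ is contained in $\{ a \mid \kappa_a \leq 2^\mu \}$, and inaccessibility of $\kappa$ yields $2^\mu < \kappa$, so the fiber is nonstationary and in $I_\IT$. Normality then gives $B_2 \in I_\IT$. For $B_1$ I would define $g_1(a) := \cf \kappa_a$, also regressive, with fibers $A_\gamma := \{ a \in B_1 \mid \cf \kappa_a = \gamma \}$ for regular $\gamma < \kappa$. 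Since $\kappa_a \in \Lim a \cap \cof \gamma$ but $\kappa_a \notin a$, we have $A_\gamma \subseteq \{ a \mid \Lim a \cap \cof \gamma \not\subset a \}$. For $\gamma = \omega$ this set lies in $I_\IT$ by Theorem~\ref{theorem.omega_cofinal_effable}; for arbitrary regular $\gamma < \kappa$ I would repeat that proof verbatim with $\omega$ replaced by $\gamma$ (enumerating cofinal sequences of type $\gamma$ in each $\delta \in \cof \gamma$, picking $\nu^\delta_a$ minimally, and letting $d_a := d^{\eta_a}_{\nu^{\eta_a}_a} \cap a$). Normality then delivers $B_1 \in I_\IT$.

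The one step that is not mere bookkeeping is the extension of Theorem~\ref{theorem.omega_cofinal_effable} from $\cof \omega$ to higher $\cof \gamma$: the thinness estimate requires $|P_\gamma a| < \kappa$ for $a \in P_\kappa \lambda$, and this is exactly where inaccessibility of $\kappa$ is essential, since $|a|^{<\gamma} < \kappa$ follows by regularity of $\kappa$ applied to the $\gamma$-many values $|a|^\mu < \kappa$. A minor technical point is that Theorem~\ref{theorem.omega_cofinal_effable} is stated under $\cf \lambda \geq \kappa$; to sidestep this one can restrict the whole argument to $\kappa \cap \Lim a$ in place of $\Lim a$, which costs nothing because $\kappa_a < \kappa$ and, inside $\kappa$, the cofinality hypothesis is automatic.
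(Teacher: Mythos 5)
Your proof is correct, but it takes a noticeably different and heavier route than the paper's. For the non-strong-limit part you press down on the least $\mu$ with $2^\mu \geq \kappa_a$ and note each fiber is nonstationary; the paper simply observes that $\{ a \in P'_\kappa \lambda \mid \kappa_a \text{ strong limit} \}$ is club when $\kappa$ is inaccessible --- the two are equivalent, yours being a Fodor-style rephrasing of the same fact. The real divergence is on the singular part. The paper handles $A = \{ a \in P'_\kappa \lambda \mid \kappa_a \text{ singular} \}$ with a single list: let $d_a$ be any cofinal subset of $\kappa_a$ of order type $\cf \kappa_a$; thinness comes for free because, $\kappa$ being inaccessible, \emph{every} $P_\kappa \lambda$-list is thin (Remark~\ref{remark.inaccessible->thin}); and $A$-effability follows by stabilizing the order type $\otp d_a$ on a stationary set and noting that for $a \subset b$ with $\kappa_a < \kappa_b$ the coherence $d_a = d_b \cap a$ forces $\otp d_b > \otp d_a$. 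You instead stabilize $\cf \kappa_a = \gamma$ first and then rerun the entire construction of Theorem~\ref{theorem.omega_cofinal_effable} at cofinality $\gamma$, reproving thinness by hand via $|a|^{<\gamma} < \kappa$. That generalization does go through (including your fix for the absent hypothesis $\cf \lambda \geq \kappa$ by restricting to $\Lim a \cap \kappa$, which suffices since the relevant witness is $\kappa_a < \kappa$), and it correctly isolates one place where inaccessibility enters; but it is unnecessary labor here: once you invoke the fact that every list is thin for inaccessible $\kappa$, you can discard the canonical enumerations $d^\delta_\nu$ and the minimization defining $\nu^\delta_a$ altogether and work with bare cofinal sequences, which is what the paper's shorter argument does.
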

\begin{proof}
	As $\kappa$ is inaccessible, $\{ a \in P'_\kappa \lambda\ |\ \text{$\kappa_a$ strong limit} \}$ is club.
	So it remains to show $A \coloneqq \{ a \in P'_\kappa \lambda\ |\ \text{$\kappa_a$ singular} \} \in I_\IT[\kappa, \lambda]$.
	Suppose $A \notin I_\IT[\kappa, \lambda]$, and for $a \in A$ let $d_a \subset a$ be such that $\sup d_a = \kappa_a$, $\otp d_a = \cf \kappa_a$.
	Then there exists a stationary $S \subset A$ such that $d_a = d \cap a$ for all $a \in S$.
	We may assume $\kappa_a = \delta$ for some $\delta < \kappa$ and all $a \in S$.
	But if $a, b \in S$ are such that $a \subset b$ and $\kappa_a < \kappa_b$, then $\otp d_b > \delta$, a contradiction.
\end{proof}

\begin{proposition}\label{prop.I_IT_closure}
	Let $\kappa$ be inaccessible.
	Let $g: P_\kappa \lambda \to P_\kappa \lambda$.
	Then
	\begin{equation*}
		\{ a \in P'_\kappa \lambda\ |\ \forall z \in P_{\kappa_a} a\ g(z) \subset a \} \in F_\IT [\kappa, \lambda].
	\end{equation*}
\end{proposition}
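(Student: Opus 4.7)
The plan is to show the complement lies in $I_\IT[\kappa,\lambda]$ by exhibiting a thin $P_\kappa\lambda$-list that is effable on it. Set $B := \{a \in P'_\kappa\lambda \mid \exists z \in P_{\kappa_a} a,\ g(z) \not\subset a\}$; since $P'_\kappa\lambda$ is club, the complement of the set in the statement differs from $B$ by a non-stationary set, which already lies in $I_\IT[\kappa,\lambda]$ by normality (Proposition~\ref{prop.I_IT.normal}), so it suffices to place $B$ in the ideal. For each $a \in B$ pick some $z_a \in P_{\kappa_a} a$ with $g(z_a) \not\subset a$ and set $d_a := z_a$; for $a \notin B$ set $d_a := \emptyset$. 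This defines a $P_\kappa\lambda$-list, and since $\kappa$ is inaccessible, Remark~\ref{remark.inaccessible->thin} makes it thin for free; only $B$-effability remains to be verified.

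Assume toward a contradiction that there is a stationary $S \subset B$ and a set $d \subset \lambda$ with $d \cap a = z_a$ for every $a \in S$. The main step, which I expect to be the chief obstacle, is to show $|d| < \kappa$. Because $z_a \in P_{\kappa_a} a$, the map $\phi(a) := |d \cap a|$ satisfies $\phi(a) < \kappa_a$, hence $\phi(a) \in a$, so it is regressive; Fodor's lemma for the normal club filter on $P_\kappa\lambda$ then yields a stationary $S_1 \subset S$ and a cardinal $\mu < \kappa$ with $\phi \equiv \mu$ on $S_1$. If $a \subset b$ are both in $S_1$, then $d \cap a \subset d \cap b$ while $|d \cap a| = |d \cap b| = \mu$, forcing $d \cap a = d \cap b$. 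Fixing $a_0 \in S_1$ and setting $S_2 := S_1 \cap \{a \mid a_0 \subset a\}$, which is still stationary, the trace $d \cap a$ is constantly equal to $d \cap a_0$ on $S_2$; since $\bigcup S_2 = \lambda$, this forces $d = d \cap a_0 \in P_\kappa\lambda$.

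Once $d \in P_\kappa\lambda$ is established, the contradiction is immediate: the set $\{a \in P_\kappa\lambda \mid d \cup g(d) \subset a \text{ and } |d| \in a\}$ is club, so it meets the stationary $S$, and for any $a$ in the intersection, $d \subset a$ gives $z_a = d \cap a = d$ with $d \in P_{\kappa_a} a$, whereas $g(d) \subset a$ gives $g(z_a) = g(d) \subset a$, contradicting the defining property of $z_a$. Hence no coherent $d$ exists, the list is $B$-effable, and $B \in I_\IT[\kappa,\lambda]$ as required. The remainder of the argument after the Fodor step is just routine closure under the club filter, so this step is where the real work takes place.
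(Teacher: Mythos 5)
Your overall strategy coincides with the paper's: the same set $B$, the same list $d_a := z_a$ with thinness coming for free from inaccessibility, a Fodor argument to fix $\mu = |z_a|$ on a stationary set, and the same closing club argument. However, the step you yourself single out as the crux is broken. From $a \subset b$ in $S_1$, $d \cap a \subset d \cap b$, and $|d \cap a| = |d \cap b| = \mu$ you conclude $d \cap a = d \cap b$. This is valid only when $\mu$ is finite; for infinite $\mu$ a proper subset can have the same cardinality, so nothing forces the traces to stabilize, and the deduction that $d$ equals $d \cap a_0$ for a single fixed $a_0$ collapses. Since $\mu$ can certainly be infinite here, this is a genuine gap, not a cosmetic one.

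The repair is exactly what the paper does at this point, and it is short: having fixed $\mu$ on the stationary set $S_1$, suppose $|d| > \mu$ and choose $y \subset d$ with $|y| = \mu^+$; as $\kappa$ is inaccessible, $\mu^+ < \kappa$, so $\{ a \mid y \subset a \}$ is club and meets $S_1$ in some $a$, for which $z_a = d \cap a \supset y$ yields $\mu = |z_a| \geq \mu^+$, a contradiction. Hence $|d| \leq \mu < \kappa$, which is all your final paragraph actually uses (you never need $d = d \cap a_0$, only $d \in P_\kappa \lambda$). With that one substitution your argument is correct and is essentially the proof given in the paper.
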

\begin{proof}
	Suppose not.
	Then
	\begin{equation*}
		B \coloneqq \{ a \in P'_\kappa \lambda\ |\ \exists z_a \in P_{\kappa_a} a\ g(z_a) \not\subset a \} \notin I_\IT [\kappa, \lambda].
	\end{equation*}
	So let $S \subset B$ be stationary and $z \subset \lambda$ be such that $z_a = z \cap a$ for all $a \in S$.
	For all $a \in S$ we have $\mu_a \coloneqq |z_a| < \kappa_a$, so there are a stationary $S' \subset S$ and $\mu < \kappa$ such that $\mu_a = \mu$ for all $a \in S'$.

	Suppose $|z| > \mu$.
	Then there is $y \subset z$ such that $|y| = \mu^+ < \kappa$.
	But $S'' \coloneqq \{ a \in S'\ |\ y \subset a \}$ is stationary and for every $a \in S''$ we have $z_a = z \cap a \supset y \cap a = y$, which implies $\mu = \mu_a = |z_a| \geq |y| = \mu^+$, a contradiction.

	Since $S'$ is cofinal, there is an $a \in S'$ such that $z \cup g(z) \subset a$.
	But then $z_a = z \cap a = z$ and $g(z_a) = g(z) \subset a$, so that $a \notin B$, contradicting $S' \subset B$.
\end{proof}

\section{The failure of a weak version of square}\label{sect.weak_square}
We define a weak variant of the square principle that is natural for our application.
It is a ``threaded'' version of Schimmerling's two cardinal square principle that is only defined on a subset $E$ of $\lambda$.
\begin{definition}
	A sequence $\langle \mathcal{C}_\alpha\ |\ \alpha \in \LimNoArg \cap E \cap \lambda \rangle$ is called a
	\emph{$\square_E(\kappa, \lambda)$-sequence} if it satisfies the following properties.
	\begin{enumerate}[(i)]
		\item $0 < |\mathcal{C}_\alpha| < \kappa$ for all $\alpha \in \LimNoArg \cap E \cap \lambda$,
		\item $C \subset \alpha$ is club for all $\alpha \in \LimNoArg \cap E \cap \lambda$ and $C \in \mathcal{C}_\alpha$,
		\item $C \cap \beta \in \mathcal{C}_\beta$ for all $\alpha \in \LimNoArg \cap E \cap \lambda$,
			$C \in \mathcal{C}_\alpha$ and $\beta \in \Lim C$,
		\item there is no club $D \subset \lambda$ such that $D \cap \delta \in \mathcal{C}_\delta$
			for all $\delta \in \Lim D \cap E \cap \lambda$.
	\end{enumerate}
	We say that \emph{$\square_E(\kappa, \lambda)$} holds if there exists a $\square_E(\kappa, \lambda)$-sequence.
	\emph{$\square(\kappa, \lambda)$} stands for $\square_\lambda(\kappa, \lambda)$.
\end{definition}
Note that $\square_{\tau, <\kappa}$ implies $\square(\kappa, \tau^+)$
and that $\square(\lambda)$ is $\square(2, \lambda)$.

\begin{theorem}\label{theorem.ITP->non_square}
	Suppose $\cf \lambda \geq \kappa$ and $\square_{\cof(<\kappa)}(\kappa, \lambda)$ holds.
	Then $\non \ITP(\kappa, \lambda)$.
\end{theorem}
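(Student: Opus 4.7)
The plan is to turn a $\square_{\cof(<\kappa)}(\kappa,\lambda)$-sequence $\langle\mathcal{C}_\alpha\rangle$ into a thin $P_\kappa\lambda$-list whose putative ineffable branch would decode into a club thread through $\vec{\mathcal{C}}$, contradicting property~(iv). Because $|a|<\kappa\le\cf\lambda$, every nonempty $a\in P_\kappa\lambda$ satisfies $\eta_a:=\sup a<\lambda$ with $\cf\eta_a\le|a|<\kappa$, so on a club $C_0\subseteq P_\kappa\lambda$ we may assume $\eta_a\in\LimNoArg\cap\cof(<\kappa)\cap\lambda$ and $\eta_a\notin a$, whence $a$ is cofinal in $\eta_a$. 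Fix enumerations $\mathcal{C}_\alpha=\{C^\alpha_\nu\mid\nu<\tau_\alpha\}$ with $\tau_\alpha<\kappa$, and for $a\in C_0$ set $\nu_a:=\min\{\nu<\tau_{\eta_a}\mid C^{\eta_a}_\nu\cap a\text{ is cofinal in }\eta_a\}$ when that set is non-empty, defining $d_a:=C^{\eta_a}_{\nu_a}\cap a$; otherwise $d_a:=\emptyset$.

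Thinness follows from the coherence clause~(iii). For $c$ in a suitable subclub of $C_0$ with $\sup c\in\LimNoArg\cap\cof(<\kappa)\cap\lambda$ and for $c\subseteq a$ with $d_a\neq\emptyset$, we have $d_a\cap c=C^{\eta_a}_{\nu_a}\cap c$. If $\eta_a=\sup c$ there are at most $|\mathcal{C}_{\sup c}|<\kappa$ choices. If $\eta_a>\sup c$ and $\sup c\in\Lim C^{\eta_a}_{\nu_a}$, then~(iii) puts $C^{\eta_a}_{\nu_a}\cap\sup c\in\mathcal{C}_{\sup c}$, again at most $<\kappa$ values. The remaining case has $C^{\eta_a}_{\nu_a}\cap\sup c$ bounded below $\sup c$ by a limit point $\beta$ of $C^{\eta_a}_{\nu_a}$, where~(iii) applies at $\beta$; a further closure condition on $c$ absorbing these witnesses again limits $d_a\cap c$ to fewer than $\kappa$ possibilities.

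Suppose for contradiction $\langle d_a\rangle$ has an ineffable branch $d$ witnessed by a stationary $S\subseteq C_0$. After shrinking $S$ via Fodor's lemma applied to $a\mapsto\nu_a$ and a reflection argument ensuring each $a\in S$ distinguishes distinct members of $\mathcal{C}_{\eta_a}$ by intersection, the argument for $a\subseteq b$ in $S$ with $\eta_a<\eta_b$ runs as follows: $d\cap a=C^{\eta_a}_{\nu_a}\cap a=C^{\eta_b}_{\nu_b}\cap a$ is cofinal in $\eta_a$, so $\eta_a\in\Lim C^{\eta_b}_{\nu_b}$ and $C^{\eta_b}_{\nu_b}\cap\eta_a\in\mathcal{C}_{\eta_a}$ by~(iii); the distinguishing property of $a$ then forces $C^{\eta_b}_{\nu_b}\cap\eta_a=C^{\eta_a}_{\nu_a}$. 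Set $D:=\bigcup_{a\in S}C^{\eta_a}_{\nu_a}$. The resulting nestedness makes $D$ closed and unbounded in $\lambda$, and for every $\delta\in\Lim D\cap\cof(<\kappa)\cap\lambda$, picking $a\in S$ with $\eta_a>\delta$ gives $D\cap\delta=C^{\eta_a}_{\nu_a}\cap\delta\in\mathcal{C}_\delta$ by~(iii), contradicting~(iv). The main difficulty is securing the distinguishing property on stationary $S$: without it $C^{\eta_b}_{\nu_b}\cap\eta_a$ might be a different member of $\mathcal{C}_{\eta_a}$ which merely agrees with $C^{\eta_a}_{\nu_a}$ on the cofinal subset $a\cap\eta_a$, and the thread construction breaks down.
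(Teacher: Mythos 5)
There are two genuine gaps, and both are points where the paper's proof does real work that your construction skips. The first concerns the case $d_a=\emptyset$. You set $d_a:=\emptyset$ whenever no $C\in\mathcal{C}_{\eta_a}$ meets $a$ cofinally in $\eta_a$, but if that happens on a stationary set then $d=\emptyset$ is itself an ineffable branch of your list and the argument proves nothing. Knowing that $a$ is cofinal in $\eta_a$ does not help: a club $C\subseteq\eta_a$ and a cofinal $a\subseteq\eta_a$ can have bounded intersection unless $a$ contains enough of its own limit points, and the set of $a$ that fail this (e.g.\ that omit some $\omega$-cofinal limit point of $a$) is stationary in $P_\kappa\lambda$ whenever $\kappa>\omega_1$, so you cannot shrink to a club to avoid it. The paper's proof is organized around exactly this obstacle: it works only on $A=\{a\ |\ \Lim a\cap\cof\omega\subseteq a\}$, where every club in $\sup a$ automatically meets $a$ cofinally, and it pays for that restriction by invoking Theorem~\ref{theorem.omega_cofinal_effable}, which produces a separate thin $P_\kappa\lambda$-list (built from enumerations of $\omega$-sequences) witnessing $A\in F_\IT[\kappa,\lambda]$; the two lists are then combined through the ideal $I_\IT[\kappa,\lambda]$. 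Your proposal has no substitute for this component, and without it the construction does not get off the ground.

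The second gap is the one you flag yourself: the ``distinguishing property'' that would let you conclude $C^{\eta_b}_{\nu_b}\cap\eta_a=C^{\eta_a}_{\nu_a}$ from agreement on $a$. Nothing in the definition of a $\square_{\cof(<\kappa)}(\kappa,\lambda)$-sequence prevents two distinct members of $\mathcal{C}_{\eta_a}$ from having the same intersection with $a$ (they may differ only at points outside $a$), and since $\eta_a=\sup a\notin M$ for the models $M$ generating $a$, elementarity gives you no leverage on $\mathcal{C}_{\eta_a}$. So the nestedness of the family $\{C^{\eta_a}_{\nu_a}\ |\ a\in S\}$, and hence the closure and coherence of $D=\bigcup_{a\in S}C^{\eta_a}_{\nu_a}$, is not secured. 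The paper sidesteps this entirely: it fixes in advance a single $C_\gamma\in\mathcal{C}_\gamma$ for each $\gamma$, never attempts to glue these clubs together, and instead shows that the branch $d$ itself is the thread. For each fixed $\delta\in\Lim d\cap\cof(<\kappa)$ it considers the cofinally many $a$ with $\delta\in\Lim(d\cap a)$, notes that coherence gives some $C_a\in\mathcal{C}_\delta$ with $d\cap\delta\cap a=C_a\cap a$, and uses $|\mathcal{C}_\delta|<\kappa$ together with $\kappa$-completeness of the cofinal filter to stabilize $C_a=C$ on a cofinal set, whence $d\cap\delta=C\in\mathcal{C}_\delta$. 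This local pigeonhole at each $\delta$ makes the question of \emph{which} member of $\mathcal{C}_{\eta_a}$ was chosen irrelevant, which is precisely the difficulty your route cannot resolve. (Your thinness sketch is in the right spirit --- the paper's version takes $\eta:=\max\Lim(d_b\cap a)\in\Lim a$ so that only $|a|$-many coherence classes arise --- but as written the parameter $\beta$ in your third case ranges over too many ordinals; this part is repairable, unlike the two gaps above.)
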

\begin{proof}
	Let $A \coloneqq \{ a \in P_\kappa \lambda\ |\ \Lim a \cap \cof \omega \subset a \}$.
	By Theorem~\ref{theorem.omega_cofinal_effable}, $A \in F_\IT[\kappa, \lambda]$.
	So it remains to show $A \in I_\IT [\kappa, \lambda]$.
	We may assume $\sup a \notin a$ for all $a \in A$.
	Let $\langle \mathcal{C}_\gamma\ |\ \gamma \in \LimNoArg \cap \cof(<\kappa) \cap \lambda \rangle$ be a $\square_{\cof(<\kappa)}(\kappa, \lambda)$-sequence.
	For $\gamma \in \LimNoArg \cap \cof(<\kappa) \cap \lambda$ let $C_\gamma \in \mathcal{C}_\gamma$, and set $d_a \coloneqq C_{\sup a} \cap a$ for $a \in A$, otherwise $d_a \coloneqq \emptyset$.
	Then, since $\Lim a \cap \cof \omega \subset a$,
	\begin{equation}\label{theorem.square->nonITP.eq1}
		\sup d_a = \sup a
	\end{equation}
	for every $a \in A$.

	$\langle d_a\ |\ a \in P_\kappa \lambda \rangle$ is thin, for let $a \in P_\kappa \lambda$.
	Set
	\begin{equation*}
		B_a \coloneqq \{ ( C \cap a ) \cup h\ |\ \exists \eta \in \Lim a\ C \in \mathcal{C}_\eta,\ h \in P_\omega a \} \cup P_\omega a.
	\end{equation*}
	Then $|B_a| < \kappa$.
	Let $b \in A$, $a \subset b$, and suppose $d_b \cap a \notin P_\omega a$.
	Let $\eta \coloneqq \max \Lim (d_b \cap a)$.
	Then $\eta \in \Lim C_{\sup b}$, so there is a $C \in \mathcal{C}_\eta$ such that $d_b \cap \eta = C_{\sup b} \cap b \cap \eta = C \cap b$, so $d_b \cap a \cap \eta = C \cap a$.
	Since $|d_b \cap a - \eta| < \omega$, this means $d_b \cap a = ( C \cap a ) \cup ( d_b \cap a - \eta ) \in B_a$.

	$\langle d_a\ |\ a \in P_\kappa \lambda \rangle$ is also $A$-effable.
	For suppose there were a cofinal $U \subset A$ and $d \subset \lambda$ such that $d_a = d \cap a$ for all $a \in U$.
	Then $d$ is unbounded in $\lambda$ by~(\ref{theorem.square->nonITP.eq1}).
	Let $\delta \in \Lim d \cap \cof (< \kappa) \cap \lambda$.
	We will show $d \cap \delta \in \mathcal{C}_\delta$, which contradicts the fact that $\langle \mathcal{C}_\alpha\ |\ \alpha \in \LimNoArg \cap \cof(<\kappa) \cap \lambda \rangle$ is a $\square_{\cof(<\kappa)}(\kappa, \lambda)$-sequence, thus finishing the proof.
	For every $a \in U$ such that $\delta \in \Lim (d \cap a)$ we have $C_{\sup a} \cap a = d_a = d \cap a$, and thus $\delta \in \Lim C_{\sup a}$, so that there is a $C_a \in \mathcal{C}_\delta$ such that $d \cap a \cap \delta = C_a \cap a$.
	But since $|\mathcal{C}_\delta| < \kappa$, there is a cofinal $U' \subset \{ a \in U\ |\ \delta \in \Lim (d \cap a) \}$ such that $C_a = C$ for some $C \in \mathcal{C}_\delta$ and all $a \in U'$.
	But then we have $d \cap \delta \cap a = C \cap a$ for all $a \in U'$, which means $d \cap \delta = C \in \mathcal{C}_\delta$.
\end{proof}

As a corollary, we get a well-known result originally due to Solovay~\cite{solovay}.
\begin{corollary}\label{cor.supercompact->non_square}
	Suppose $\kappa$ is supercompact.
	Then $\non \square_{\cof(<\kappa)}(\kappa, \lambda)$ for all $\kappa \leq \lambda$ with $\cf \lambda \geq \kappa$.
	In particular $\non \square(\lambda)$ for all $\lambda \geq \kappa$ with $\cf \lambda \geq \kappa$.
\end{corollary}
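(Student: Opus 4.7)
The argument is a straightforward combination of the two main results of this section. Since $\kappa$ is supercompact it is in particular inaccessible, so Theorem~\ref{theorem.ITP<->supercompact} gives that $\ITP(\kappa, \lambda)$ holds for every $\lambda \geq \kappa$. Assuming additionally $\cf \lambda \geq \kappa$, the contrapositive of Theorem~\ref{theorem.ITP->non_square} immediately yields $\non \square_{\cof(<\kappa)}(\kappa, \lambda)$. This is the main assertion of the corollary.

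For the ``in particular'' clause it suffices to show that $\square(\lambda)$ implies $\square_{\cof(<\kappa)}(\kappa, \lambda)$, and then appeal to what has just been established. Given a $\square(\lambda)=\square(2,\lambda)$-sequence $\langle \mathcal{C}_\alpha\ |\ \alpha \in \LimNoArg \cap \lambda \rangle$, where necessarily $\mathcal{C}_\alpha = \{ E_\alpha \}$ for a single club $E_\alpha \subset \alpha$, I would restrict to $\mathcal{C}'_\alpha \coloneqq \{ E_\alpha \}$ for $\alpha \in \LimNoArg \cap \cof(<\kappa) \cap \lambda$. Conditions~(i)--(iii) of the definition of a $\square_{\cof(<\kappa)}(\kappa,\lambda)$-sequence are inherited at once, using $|\mathcal{C}'_\alpha| = 1 < \kappa$ and the fact that the coherence relation of the original sequence survives the restriction.

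The sole nontrivial step, and the main obstacle, is verifying condition~(iv): I need to argue that any club $D \subset \lambda$ threading the restricted sequence must also thread the full $\square(\lambda)$-sequence, which will contradict the assumed $\square(\lambda)$. So fix such a $D$ and an arbitrary $\delta \in \Lim D$. Since $D$ is club, $\Lim D \cap \cof \omega \cap \delta$ is cofinal in $\delta$; pick $\beta_i \in \Lim D \cap \cof \omega$ with $\beta_i$ increasing to $\delta$. Each $\beta_i$ lies in $\cof(<\kappa)$ because $\kappa$ is uncountable, so by the threading assumption $D \cap \beta_i = E_{\beta_i}$, while coherence of the original sequence gives $E_\delta \cap \beta_i = E_{\beta_i}$. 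Thus $D \cap \delta$ and $E_\delta$ agree on a set cofinal in $\delta$; as both are club in $\delta$, they coincide, so $D \cap \delta = E_\delta \in \mathcal{C}_\delta$. This holds for every $\delta \in \Lim D$, contradicting condition~(iv) of the ambient $\square(\lambda)$-sequence and completing the argument.
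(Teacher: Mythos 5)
Your derivation of the main assertion is exactly the paper's proof: Theorem~\ref{theorem.ITP<->supercompact} gives $\ITP(\kappa,\lambda)$ for all $\lambda\geq\kappa$, and the contrapositive of Theorem~\ref{theorem.ITP->non_square} then yields $\non\square_{\cof(<\kappa)}(\kappa,\lambda)$ whenever $\cf\lambda\geq\kappa$. The paper stops there and leaves the implication $\square(\lambda)\Rightarrow\square_{\cof(<\kappa)}(\kappa,\lambda)$ implicit, so your verification of the ``in particular'' clause is a useful supplement, and its overall strategy --- showing that any thread of the restricted sequence is already a thread of the full $\square(\lambda)$-sequence --- is the right one.

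Two steps in that verification need repair, however. First, for $\delta\in\Lim D$ with $\cf\delta=\omega$ the set $\Lim D\cap\cof\omega\cap\delta$ need not be cofinal in $\delta$: the elements of $D$ in a final segment of $\delta$ may all be isolated points of $D$. But for such $\delta$ --- indeed for any $\delta\in\Lim D\cap\cof(<\kappa)$ --- the identity $D\cap\delta=E_\delta$ is already the threading hypothesis, so you should split off this case and run your argument only for $\delta$ with $\cf\delta\geq\kappa>\omega$, where the cofinality claim does hold. Second, coherence (clause~(iii) of the definition) gives $E_\delta\cap\beta=E_\beta$ only for $\beta\in\Lim E_\delta$, whereas you apply it to $\beta_i$ chosen merely from $\Lim D$; you should instead pick the $\beta_i$ from $\Lim D\cap\Lim E_\delta\cap\cof\omega$, which is still cofinal in $\delta$ because $\Lim D\cap\delta$ and $\Lim E_\delta\cap\delta$ are both clubs in $\delta$ and $\cf\delta>\omega$. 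With these two adjustments your argument closes as described: $D\cap\delta$ and $E_\delta$ agree cofinally below $\delta$, hence are equal, so $D$ threads the full sequence, contradicting clause~(iv) for the $\square(\lambda)$-sequence.
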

\begin{proof}
	This follows directly from Theorem~\ref{theorem.ITP<->supercompact} and Theorem~\ref{theorem.ITP->non_square}.
\end{proof}

\section{Consistency results}\label{sect.consistency}
\begin{definition} Let $V\subseteq W$ be a pair of transitive models of\/ \ZFC.
	\begin{itemize} 
		\item $(V,W)$ satisfies the $\mu$-covering property if the class $P_\mu^V V$ is cofinal in $P_\mu^W V$, that is, for every $x \in W$ with $x \subset V$ and $|x| < \mu$ there is $z \in P_\mu^V V$ such that $x \subset z$.
		\item $(V,W)$ satisfies the $\mu$-approximation property if for all $x \in W$, $x \subset V$, it holds that if $x \cap z \in V$ for all $z \in P_\mu^V V$, then $x \in V$.
	\end{itemize}
	A forcing $\mathbb{P}$ is said to satisfy the $\mu$-covering property or the $\mu$-approximation property if for every $V$-generic $G \subset \mathbb{P}$ the pair $(V, V[G])$ satisfies the $\mu$-covering property or the $\mu$-approximation property respectively.
\end{definition}

The following theorem was originally discovered by Mitchell~\cite{mitchell}.
We cite~\cite{krueger}, where it is presented in the more modern way we use.
The reader should note we use the convention that conditions are only defined on their support.
\begin{theorem}\label{theorem.existence_iteration}
	Let $\kappa$ be inaccessible, $\tau < \kappa$ be regular and uncountable.
	Then there exists an iteration $\langle \mathbb{P}_\nu\ |\ \nu \leq \kappa \rangle$ such that forcing with $\mathbb{P}_\kappa$ preserves all cardinals less than or equal to $\tau$,
	$\forces_\kappa \kappa = \tau^+$ and for $\eta = 0$ and every inaccessible $\eta \leq \kappa$
	\begin{enumerate}[(i)]
		\item $\mathbb{P}_\eta$ is the direct limit of $\langle \mathbb{P}_\nu\ |\ \nu < \eta \rangle$ and $\eta$-cc,\label{theorem.approx-iteration.en1}
		\item if\/ $\mathbb{P}_\kappa = \mathbb{P}_\eta * \dot{\mathbb{Q}}$, then $\forces_\eta \dot{\mathbb{Q}}\ \text{satisfies the $\omega_1$-approximation property}$,\label{theorem.approx-iteration.en3}
		\item for every $\nu < \eta$, $\mathbb{P}_\nu$ is definable in $H_\eta$ from the parameters $\tau$ and $\nu$,\label{theorem.approx-iteration.en2}
		\item $\mathbb{P}_\eta$ satisfies the $\omega_1$-covering property.\label{theorem.approx-iteration.en4}
	\end{enumerate}
\end{theorem}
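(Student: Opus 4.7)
The plan is to construct $\mathbb{P}_\kappa$ as a Mitchell-style iteration, in the modular presentation used in the cited reference. By recursion on $\nu \leq \kappa$ I let the iterand $\dot{\mathbb{Q}}_\nu$ be trivial unless $\nu$ is inaccessible, in which case $\dot{\mathbb{Q}}_\nu$ is taken to be the two-step forcing consisting of an $\text{Add}(\omega,\nu)$-like Cohen part together with a countably closed collapse $\Coll(\tau, {<}\nu)$; a condition at stage $\nu$ is a pair $(p,\dot q)$ where $p$ is a finite piece of the Cohen part and $\dot q$ is a $\mathbb{P}_\nu$-name for a condition in the collapse of size less than $\tau$. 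At inaccessible stages one takes the direct limit of the resulting poset.

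Once the construction is in place, the four clauses are verified as follows. Clause~(iii) holds by inspection: $\tau$ and $\nu$ together with the rank-initial segment $H_\eta$ determine the entire recursive definition below $\eta$, so $\mathbb{P}_\nu$ is $\Sigma_1$-definable over $H_\eta$ from $\tau,\nu$. Clause~(i) is a standard $\Delta$-system argument: since each $\mathbb{P}_\nu$ for $\nu < \eta$ has cardinality strictly below the inaccessible $\eta$ and the Cohen parts of conditions are finite, any putative antichain of size $\eta$ must stabilise to a common Cohen root, and one then uses the countable-support structure of the collapse part together with compatibility in the name component to contradict incompatibility. Clause~(iv), the $\omega_1$-covering property, follows because the tail above any inaccessible $\eta$ factors as a product of a ccc (Cohen) forcing with a countably closed (collapse) forcing, and both factors individually have $\omega_1$-covering, hence so does their product.

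The main work, and the principal obstacle, is clause~(ii), the $\omega_1$-approximation property for the quotient $\dot{\mathbb{Q}}$. My approach is to factor $\dot{\mathbb{Q}}$, over the $\mathbb{P}_\eta$-extension, as a product $\dot{\mathbb{R}} \times \dot{\mathbb{S}}$ where $\dot{\mathbb{R}}$ is Knaster (hence ccc) and $\dot{\mathbb{S}}$ is countably closed, and then invoke the now-standard product lemma that such a product has the $\omega_1$-approximation property over the ground model. The heart of that lemma shows that if $X \subset V$ is in the extension and every countable piece $X \cap z$ with $z \in V$ lies in $V$, then countable closure of $\dot{\mathbb{S}}$ allows one to reduce to the pure ccc case, where a maximal antichain of conditions deciding each countable approximation to $X$ is itself countable and can be assembled in $V$ to recover $X$. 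Once this lemma is in hand, clause~(ii) is immediate from the factorisation, and the remaining verifications are organisational bookkeeping around the Mitchell iteration together with the standard preservation of cardinals $\leq \tau$ via countable closure of the collapse part and of $\tau^+$ via the $\eta$-cc of $\mathbb{P}_\kappa$.
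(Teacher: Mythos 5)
The paper offers no proof of this theorem at all---it is quoted from Mitchell via Krueger's presentation---so your sketch has to be measured against the known construction, and against that standard it has three genuine gaps. First, your iteration is nontrivial only at inaccessible stages $\nu$. If $\kappa$ is the least inaccessible (or the inaccessibles below $\kappa$ are bounded in $\kappa$), then $\mathbb{P}_\kappa$ is trivial or of size less than $\kappa$, and it cannot force $\kappa = \tau^+$. The Mitchell iteration must add Cohen subsets of $\omega$ and perform collapses cofinally in $\kappa$---in the standard presentation at essentially every stage---with the inaccessible $\eta$ entering only in the verification of (i)--(iv), not in the definition of the iterands. Second, the quotient $\dot{\mathbb{Q}}$ over $\mathbb{P}_\eta$ does not literally factor as a product of a Knaster forcing with a countably closed forcing; what is true, and is the main technical content of Krueger's presentation, is that it is a \emph{projection} of such a product (a Cohen part times a $\tau$-closed term forcing). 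One then needs the easy but necessary remark that the $\omega_1$-approximation and $\omega_1$-covering properties pass from a forcing to anything it projects onto, since every extension by the projected forcing sits inside an extension by the product. The same correction applies to your justification of (iv), which moreover argues about the tail above $\eta$ where the clause concerns $\mathbb{P}_\eta$ itself.

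Third, and most seriously, your sketch of the key product lemma is invalid. The argument ``a maximal antichain of conditions deciding each countable approximation to $X$ is countable and can be assembled in $V$ to recover $X$'' would apply verbatim to forcing with a Suslin tree: that forcing is ccc, and its generic branch $b$ satisfies $b \cap z \in V$ for every countable $z \in V$ (any node above the relevant levels decides $b$ below them), yet $b \notin V$. So ccc-ness of the first factor does not suffice, and no antichain-counting argument can work, because knowing that each approximation lies in $V$ does not identify \emph{which} element of $V$ it is uniformly. The correct hypothesis is that the square of the first factor is ccc (Knaster suffices, so your choice of factor is fine), and the proof is a splitting argument: assuming $X \notin V$, one uses the countable closure of the second factor to build a binary tree of conditions forcing pairwise incompatible countable approximations to $X$, and extracts from its branches an uncountable antichain in the square of the first factor. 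Your overall architecture---Cohen times term forcing, $\eta$-cc by a $\Delta$-system argument using the inaccessibility of $\eta$, approximation via the product lemma---is the right one, but each of these three points must be repaired before the sketch constitutes a proof.
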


The next is a standard lemma which we will need.
\begin{lemma}\label{lemma.iteration_earlier}
	Let $\kappa > \omega$ be regular, $\mathbb{P}_\kappa$ be the direct limit of an iteration $\langle \mathbb{P}_\nu\ |\ \nu < \kappa \rangle$.
	Suppose $\mathbb{P}_\kappa$ is $\kappa$-cc.
	Let $p \in \mathbb{P}_\kappa$ and $\dot{x} \in V^{\mathbb{P}_\kappa}$ such that $p \forces \dot{x} \in P_\kappa V$.
	Then there is $\rho < \kappa$ such that $p \forces \dot{x} \in V[\dot{G}_\rho]$.
\end{lemma}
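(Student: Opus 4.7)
The plan is to exploit $\kappa$-cc together with the fact that every condition in a direct limit along $\kappa$ has support bounded below $\kappa$, in order to collapse the name $\dot x$ down to a $\mathbb{P}_\rho$-name for some $\rho < \kappa$.

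First, since $p \forces |\dot x| < \kappa$, a maximal antichain below $p$ deciding $|\dot x|$ has size less than $\kappa$ by $\kappa$-cc, and each of its decided values is an ordinal below $\kappa$, so regularity of $\kappa$ yields some $\mu^* < \kappa$ with $p \forces |\dot x| \leq \mu^*$. I would then fix a $\mathbb{P}_\kappa$-name $\dot f$ enumerating $\dot x$ on domain $\mu^*$ with any standard padding, so that $\dot x$ is recoverable from $\dot f$ by a ground-model recipe and $p$ forces $\dot f(\alpha) \in V$ for every $\alpha < \mu^*$.

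Next, for each $\alpha < \mu^*$, the set of conditions $q \leq p$ that force $\dot f(\alpha) = \check v_{\alpha,q}$ for some $v_{\alpha,q} \in V$ is dense below $p$, so I can pick a maximal such antichain $A_\alpha$ and record the decided value $v_{\alpha,q}$. By $\kappa$-cc, $|A_\alpha| < \kappa$; because $\mathbb{P}_\kappa$ is the direct limit of $\langle \mathbb{P}_\nu\ |\ \nu < \kappa \rangle$, every $q \in A_\alpha$ has support bounded below $\kappa$; and because $\mu^* < \kappa$ and $|A_\alpha| < \kappa$ for each $\alpha$, regularity of $\kappa$ provides a single $\rho < \kappa$ containing the supports of $p$ and of every $q \in \bigcup_{\alpha < \mu^*} A_\alpha$.

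Under the convention that a condition equals its support-restriction, each $A_\alpha$ then lies in $\mathbb{P}_\rho$, and I would build a $\mathbb{P}_\rho$-name $\dot y$ that, in any $\mathbb{P}_\rho$-generic $G_\rho$ containing $p$, encodes the sequence $\langle v_{\alpha, q_\alpha}\ |\ \alpha < \mu^* \rangle$ (with $q_\alpha$ the unique element of $A_\alpha \cap G_\rho$) and applies the ground-model recipe to recover a set. To verify $p \forces \dot x = \dot y$, take any $V$-generic $G \subset \mathbb{P}_\kappa$ with $p \in G$; then $G_\rho := G \cap \mathbb{P}_\rho$ is $V$-generic for $\mathbb{P}_\rho$, contains $p$, and each $A_\alpha$ is still maximal below $p$ inside $\mathbb{P}_\rho$, so $\dot f(\alpha)^G = v_{\alpha, q_\alpha}$ for every $\alpha$, which gives $\dot x^G = \dot y^{G_\rho} \in V[G_\rho]$. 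The main obstacle is the bookkeeping step showing $A_\alpha$ remains maximal below $p$ once regarded as a subset of $\mathbb{P}_\rho$: two $\rho$-supported conditions that are compatible in $\mathbb{P}_\kappa$ must admit a common extension supported in $\rho$. This is standard in the iteration framework but is precisely the point at which the direct-limit hypothesis is genuinely used.
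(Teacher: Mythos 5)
Your argument is correct, and it is the standard one: the paper itself gives no proof of this lemma, citing it only as ``a standard lemma,'' so there is nothing to diverge from. The one step you flag as the main obstacle --- that $A_\alpha$ remain maximal below $p$ when regarded inside $\mathbb{P}_\rho$ --- is not actually needed: since $A_\alpha$ is maximal below $p$ in $\mathbb{P}_\kappa$ and $A_\alpha \subset \mathbb{P}_\rho$, any $\mathbb{P}_\kappa$-generic $G \ni p$ satisfies $A_\alpha \cap G = A_\alpha \cap G_\rho \neq \emptyset$, and $V[G_\rho]$ can then read off $q_\alpha$ and $v_{\alpha,q_\alpha}$ from $G_\rho$ together with the ground-model sequence $\langle A_\alpha, (v_{\alpha,q})_{q\in A_\alpha} \mid \alpha<\mu^*\rangle$; all that is used is the standard fact that $\mathbb{P}_\rho$ embeds completely into $\mathbb{P}_\kappa$, so that $G_\rho = G\cap\mathbb{P}_\rho$ is generic.
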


Recall from \cite{jech.combinatorial_problems} that $\kappa$ is called \emph{$\lambda$-ineffable} if every $P_\kappa \lambda$-list has an ineffable branch.
\begin{theorem}\label{theorem.ISP_consistency}
	Let $\kappa$, $\lambda$ be cardinals,
	$\tau$ regular uncountable, $\tau < \kappa \leq \lambda$, and
	$\langle \mathbb{P}_\nu\ |\ \nu \leq \kappa \rangle$ be an iteration such that for all inaccessible $\eta \leq \kappa$
	\begin{enumerate}[(i)]
		\item $\mathbb{P}_\eta$ is the direct limit of $\langle \mathbb{P}_\nu\ |\ \nu < \eta \rangle$ and $\eta$-cc,\label{theorem.ISP_consistency.en1}
		\item if\/ $\mathbb{P}_\kappa = \mathbb{P}_\eta * \dot{\mathbb{Q}}$, then $\forces_\eta \dot{\mathbb{Q}}\ \text{satisfies the $\omega_1$-approximation property}$,\label{theorem.ISP_consistency.en3}
		\item for every $\nu < \eta$, $\mathbb{P}_\nu$ is definable in $H_\eta$ from the parameters $\tau$ and $\nu$,\label{theorem.ISP_consistency.en2}
		\item $\mathbb{P}_\eta$ satisfies the $\omega_1$-covering property.\label{theorem.ISP_consistency.en4}
	\end{enumerate}
	Suppose $\kappa$ is $\lambda^{<\kappa}$-ineffable.
	Then $\forces_\kappa \text{$\ISP(\kappa, \lambda)$}$.
\end{theorem}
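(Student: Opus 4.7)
The plan is to argue by contradiction. Suppose some $p^{*} \in G$ forces $\dot{\vec d} = \langle \dot d_a \mid a \in P_\kappa\lambda\rangle$ to be a slender $P_\kappa\lambda$-list with no ineffable branch, and set $\mu := \lambda^{<\kappa}$. The strategy is to repackage $\dot{\vec d}$ as a $P_\kappa\mu$-list $\vec t$ in $V$, apply the assumed $\mu$-ineffability of $\kappa$ in $V$ to extract an ineffable branch $t$ of $\vec t$, and then decode $t$ into a $\mathbb{P}_\kappa$-name $\dot d$ for an ineffable branch of $\vec d$ in $V[G]$.

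For each $a \in P_\kappa^V \lambda$, use the $\kappa$-cc of $\mathbb{P}_\kappa$ (hypothesis~(\ref{theorem.ISP_consistency.en1}) at $\eta=\kappa$) to fix in $V$ a nice name below $p^{*}$ of the form $\dot d_a = \bigcup_{\alpha\in a}\{\alpha\}\times A^a_\alpha$ with each $A^a_\alpha$ an antichain of cardinality less than $\kappa$, so $|\dot d_a|<\kappa$. Hypotheses~(\ref{theorem.ISP_consistency.en1})--(\ref{theorem.ISP_consistency.en2}) yield $|\mathbb{P}_\kappa|=\kappa\le\mu$, so one can fix in $V$ an injection $\pi:\mathbb{P}_\kappa\times\lambda\hookrightarrow\mu$ that restricts to the identity on $\lambda$. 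For $x\in P_\kappa^V\mu$ write $a_x := x\cap\lambda$ and set $t_x := \pi[\dot d_{a_x}]$ whenever $\pi[\dot d_{a_x}]\subseteq x$, and $t_x := \emptyset$ otherwise; this yields a $P_\kappa\mu$-list $\vec t$ in $V$.

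By $\mu$-ineffability of $\kappa$ there exist $t\subseteq\mu$ and a stationary $S\subseteq P_\kappa^V\mu$ with $t\cap x = t_x$ for every $x\in S$; a routine argument discards the trivial case and lets us assume $\pi[\dot d_{a_x}]\subseteq x$ for $x \in S$. Put $\dot d := \pi^{-1}[t]$, viewed as a $\mathbb{P}_\kappa$-name for a subset of $\lambda$, and let $T := \{a_x \mid x\in S\}$. The claim to prove is that $p^{*}$ forces $d := \dot d[G]$ to be an ineffable branch of $\vec d$ witnessed by $T$, which contradicts the starting assumption. Stationarity of $T$ in $V[G]$ should follow from stationarity of $S$ in $V$ together with the $\omega_1$-covering property and $\kappa$-cc (hypotheses~(\ref{theorem.ISP_consistency.en4}) and~(\ref{theorem.ISP_consistency.en1})), and the inclusion $d_{a_x}\subseteq d\cap a_x$ is immediate, since $\dot d_{a_x}=\pi^{-1}[t\cap x]\subseteq\pi^{-1}[t]=\dot d$ as sets of pairs.

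The principal technical obstacle will be the reverse inclusion $d\cap a_x\subseteq d_{a_x}$ for $x \in S$. A generic witness $q\in G$ to $\alpha\in d$ satisfies $\pi(\alpha,q)\in t$, but there is no a priori reason for $\pi(\alpha,q)$ to lie in $x$ rather than in $t\setminus x$; only in the former case is $q\in A^{a_x}_\alpha$, which is what one needs. To force the required coherence I intend to combine Lemma~\ref{lemma.iteration_earlier}---placing each $d_a$ in some intermediate $V[G_{\rho_a}]$ with $\rho_a<\kappa$---with the $\omega_1$-approximation property of the tail forcing (hypothesis~(\ref{theorem.ISP_consistency.en3})), the definability clause (hypothesis~(\ref{theorem.ISP_consistency.en2})), and the slenderness of $\dot{\vec d}$, thinning $S$ on an additional club in $P_\kappa\mu$ so that for every $x\in S$ and $\alpha\in d\cap a_x$ the antichain $A^{a_x}_\alpha$ must be met by $G$. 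This alignment of the nice names $\dot d_{a_x}$ with the single global name $\dot d$, enforced by $\mu$-ineffability and witnessed uniformly on a stationary set, is the heart of the argument.
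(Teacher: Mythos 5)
Your high-level plan---code the names for the $d_a$ as a $P_\kappa\mu$-list in $V$ with $\mu=\lambda^{<\kappa}$, apply $\mu$-ineffability, and decode the resulting branch into a name for an ineffable branch---is the same skeleton as the paper's proof, but the two steps you defer are exactly where the content lies, and neither is repaired by the devices you sketch. First, the step ``a routine argument discards the trivial case'' is not routine and is, as stated, a gap. The set $\{x\in P_\kappa\mu\ |\ \pi[\dot d_{x\cap\lambda}]\subseteq x\}$ asks for closure of $x$ under a function of $x\cap\lambda$ itself, not of small subsets of $x$; such a set need not be club, and its complement need not lie in the relevant ideal (the closure result the paper does prove, Proposition~\ref{prop.I_IT_closure}, applies only to functions evaluated at arguments $z\in P_{\kappa_a}a$ of size less than $\kappa_a$, and its proof breaks down for arguments of size $\geq\kappa_a$). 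So the ineffable branch you obtain may well be the trivial one, concentrating on $x$ with $t_x=\emptyset$. The paper avoids this entirely by first proving (Claim~\ref{consistency_claim1}) that $d_{M\cap\lambda}$ already lies in the intermediate model $V[G_{\kappa_M}]$, so that it has a $\mathbb{P}_{\kappa_M}$-name; since $\mathbb{P}_{\kappa_M}\subseteq M$ by hypotheses (i) and (iii), the coded object is automatically a subset of $M$ and nothing needs to be discarded. This localization step is the only place where slenderness, the $\omega_1$-approximation property of the tail, and the $\omega_1$-covering property are actually used; in your write-up these hypotheses appear only in a closing sentence of intent.

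Second, the reverse inclusion $d\cap a_x\subseteq d_{a_x}$, which you correctly identify as the principal obstacle, cannot be obtained by ``thinning $S$ on an additional club'': whether $G$ meets a given antichain $A^{a_x}_\alpha$ is a fact about the generic filter, not something one can arrange in $V$ by shrinking $S$. The structural problem is that your nice names record only positive information: a witness $q\in G$ to $\alpha\in d$ arises from some other $y\in S$ via $q\in A^{a_y}_\alpha$, i.e., $q$ forces $\alpha\in\dot d_{a_y}$, and without already knowing that the names cohere there is no way to conclude that $q$ forces $\alpha\in\dot d_{a_x}$---which is precisely what is to be proved. The paper resolves this by coding, for each $M$, the set $D_M$ of \emph{all} decisions $\langle p,\alpha,n\rangle$ with $p\in\mathbb{P}_{\kappa_M}$ and $n\in\{0,1\}$ recording whether $p$ forces $\alpha\in\dot d_M$ or $\alpha\notin\dot d_M$; genericity of $G_{\kappa_M}$ over the small forcing $\mathbb{P}_{\kappa_M}$ guarantees that each $\alpha\in M\cap\lambda$ is decided by some condition in $G_{\kappa_M}$, and the coherence $D_M=D\cap M$ on the stationary set rules out conflicting decisions at different $M$. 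To repair your argument you would need both modifications: localize the names to $\mathbb{P}_{\kappa_M}$ via the approximation and covering properties, and code decision sets rather than nice names.
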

\begin{proof}
	Let $G \subset \mathbb{P}_\kappa$ be $V$-generic and work in $V[G]$.
	Let $\langle d_a\ |\ a \in P_\kappa \lambda \rangle$ be a slender $P_\kappa \lambda$-list, and let $C' \subset P_\kappa H_\theta$ be a club witnessing the slenderness of $\langle d_a\ |\ a \in P_\kappa \lambda \rangle$ for some large enough $\theta$.

	For $x \in P_\kappa \lambda$ by Lemma~\ref{lemma.iteration_earlier} there is $\rho_x < \kappa$ such that $x \in V[G_{\rho_x}]$.
	Thus $C \coloneqq \{ M \in C'\ |\ \forall x \in P_\kappa \lambda \cap M\ \rho_x \in M \}$ is such that $P_\kappa \lambda \cap M \subset V[G_{\kappa_M}]$ for all $M \in C$.

	Let $\sigma \coloneqq (\lambda^{<\kappa})^V$.
	Let $\bar{M} \in V$ be such that $\bar{M} \prec H_\theta^V$, $\lambda \cup P^V_\kappa \lambda \subset \bar{M}$, $|\bar{M}|^V = \sigma$.
	Let $C_0 \coloneqq C \restriction \bar{M}$.
	Since $\mathbb{P}_\kappa$ is $\kappa$-cc, there is a $C_1 \in V$ such that $C_1 \subset C_0$ and $V \models C_1 \subset P_\kappa \bar{M}\ \text{club}$.

	Let
	\begin{equation*}
		E \coloneqq \{ M \in C_1\ |\ \tau < \kappa_M,\ \kappa_M\ \text{inaccessible in $V$},\ 
		P^V_\tau (M \cap \lambda) \subset M\}.
	\end{equation*}

	\begin{claim}\label{consistency_claim1}
		If $M \in E$, then $d_{M \cap \lambda} \in V[G_{\kappa_M}]$.
	\end{claim}
	\begin{claimproof}
		Let $z \in P^{V[G_{\kappa_M}]}_{\omega_1} (M \cap \lambda)$.
		$\mathbb{P}_{\kappa_M}$ satisfies the $\omega_1$-covering property by~(\ref{theorem.ISP_consistency.en4}), so there is $b \in P^V_{\omega_1} (M \cap \lambda)$ such that $z \subset b$.
		Let $M' \in C$ be such that $M = M' \cap \bar{M}$.
		Then $b \in M \subset M'$.
		Therefore, by the slenderness of $\langle d_a\ |\ a \in P_\kappa \lambda \rangle$, $d_{M \cap \lambda} \cap b  = d_{M' \cap \lambda} \cap b \in P_\kappa \lambda \cap M' \subset V[G_{\kappa_{M'}}] = V[G_{\kappa_M}]$ and thus $d_{M \cap \lambda} \cap z = d_{M \cap \lambda} \cap b \cap z \in V[G_{\kappa_M}]$.
	
		Let $\mathbb{P}_\kappa = \mathbb{P}_{\kappa_M} * \dot{\mathbb{Q}}$.
		Then $\dot{\mathbb{Q}}^{G_{\kappa_M}}$ satisfies the $\omega_1$-approximation property by~(\ref{theorem.ISP_consistency.en3}), so since $z$ was arbitrary we get $d_{M \cap \lambda} \in V[G_{\kappa_M}]$.
	\end{claimproof}
	
	For $M \in E$ we have $\mathbb{P}_{\kappa_M} \subset M$ by~(\ref{theorem.ISP_consistency.en1}) and~(\ref{theorem.ISP_consistency.en2}).
	By Claim~\ref{consistency_claim1} there is $\dot{d}_M \in V^{\mathbb{P}_{\kappa_M}}$ such that $\dot{d}_M^{G_{\kappa_M}} = d_{M \cap \lambda}$.
	Let
	\begin{equation*}
		D_M \coloneqq \{ \langle p, \alpha, n \rangle\ |\ p \in \mathbb{P}_{\kappa_M},\ \alpha \in M \cap \lambda,\ 
		(n = 0 \land p \forces_{\kappa_M} \alpha \notin \dot{d}_M) \lor (n = 1 \land p \forces_{\kappa_M} \alpha \in \dot{d}_M) \}.
	\end{equation*}
	Then $\langle D_M\ |\ M \in E \rangle \in V$ and $D_M \subset M$.

	Work in $V$.
	Let $f: \bar{M} \to \sigma$ be a bijection.
	If $\lambda > \kappa$, additionally choose $f$ such that $f \restriction \kappa = \id \restriction \kappa$.
	If $\kappa = \lambda$, then $\{ M \in C_1\ |\ f''M = \kappa_M \}$ is club, and we may assume it is $C_1$.
	By Propositions~\ref{prop.I_IT_inaccessible} and~\ref{prop.I_IT_closure}
	\begin{equation*}
		F \coloneqq \{ m \in P'_\kappa \sigma\ |\ \kappa_m\ \text{inaccessible},\ 
		P_\tau (m \cap f'' \lambda) \subset m \} \in F_\IT[\kappa, \sigma].
	\end{equation*}
	As $\kappa$ is $\sigma$-ineffable, there exist a stationary $S' \subset F$ and $d' \subset \sigma$ such that $f''D_{{f^{-1}}''m} = d' \cap m$ for all $m \in S'$ such that ${f^{-1}}''m \in E$.
	But $E = \{ {f^{-1}}''m\ |\ m \in F \} \cap C_1$ by our choice of $f$ or the additional assumption on $C_1$, so for $S \coloneqq \{ {f^{-1}}''m\ |\ m \in S' \cap F \} \cap C_1$ and for $D \coloneqq {f^{-1}}''d'$ we have $D_M = D \cap M$ for all $M \in S$.

	Back in $V[G]$, let $T \coloneqq S \restriction \lambda$ and
	\begin{equation*}
		d \coloneqq \{ \alpha < \lambda\ |\ \exists p \in G\ \langle p, \alpha, 1 \rangle \in D \}.
	\end{equation*}
	
	\begin{claim}\label{theorem.ISP_consistency.claim3}
		If $a \in T$, then $d_a = d \cap a$.
	\end{claim}
	\begin{claimproof}
		If $a \in T$, then $a = M \cap \lambda$ for some $M \in S$.
		But then for $\alpha \in a$, if $\alpha \in d_a = d_{M \cap \lambda} = \dot{d}_M^{G_{\kappa_M}}$, then there is $p \in G_{\kappa_M}$ such that $p \forces_{\kappa_M} \alpha \in \dot{d}_M$.
		Thus $\langle p, \alpha, 1 \rangle \in D_M = D \cap M$, so that $\alpha \in d$ by the definition of $d$.

		By the same argument, if $\alpha \notin d_a$, then $\alpha \notin d$.
	\end{claimproof}
	$T$ is stationary in $V$, so it is also stationary in $V[G]$ since $\mathbb{P}_\kappa$ is $\kappa$-cc.
	Therefore, by Claim~\ref{theorem.ISP_consistency.claim3}, $\langle d_a\ |\ a \in P_\kappa \lambda \rangle$ is not effable.
\end{proof}
Note that if $\kappa$ is $\lambda$-ineffable and $\cf \lambda \geq \kappa$, then by \cite{johnson} it follows that $\lambda^{<\kappa} = \lambda$.
So in this case, Theorem~\ref{theorem.ISP_consistency} shows $\ISP(\kappa, \lambda)$ is forced from the more natural condition that $\kappa$ is $\lambda$-ineffable.

\begin{corollary}\label{corollary.consistency_subtle_ineffable}
	If the theory \ZFC\ + ``there is an ineffable cardinal'' is consistent, then the theory \ZFC\ + $\ISP(\omega_2, \omega_2)$ is consistent.
\end{corollary}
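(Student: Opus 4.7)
The plan is to simply combine the two consistency theorems already in the excerpt: Theorem~\ref{theorem.existence_iteration} supplies an iteration with the right preservation properties, and Theorem~\ref{theorem.ISP_consistency} turns ineffability into $\ISP$ in the generic extension.

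Start in $V \models \ZFC + $ ``$\kappa$ is ineffable.'' In particular $\kappa$ is inaccessible, so apply Theorem~\ref{theorem.existence_iteration} with $\tau \coloneqq \omega_1$ to obtain an iteration $\langle \mathbb{P}_\nu \mid \nu \leq \kappa \rangle$ such that $\mathbb{P}_\kappa$ preserves $\omega_1$, forces $\kappa = \omega_2$, and satisfies clauses (i)--(iv) of Theorem~\ref{theorem.existence_iteration}. These are verbatim the hypotheses (i)--(iv) of Theorem~\ref{theorem.ISP_consistency}.

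Now I set $\lambda \coloneqq \kappa$. Because $\kappa$ is inaccessible, $\lambda^{<\kappa} = \kappa^{<\kappa} = \kappa$, so the assumption ``$\kappa$ is $\lambda^{<\kappa}$-ineffable'' of Theorem~\ref{theorem.ISP_consistency} reduces to ``$\kappa$ is $\kappa$-ineffable,'' which is the very definition of ineffability. Thus Theorem~\ref{theorem.ISP_consistency} applies and yields $\forces_\kappa \ISP(\kappa, \lambda)$, i.e. $\forces_\kappa \ISP(\omega_2, \omega_2)$, since $\mathbb{P}_\kappa$ forces $\kappa = \omega_2$. Taking any $V$-generic $G \subset \mathbb{P}_\kappa$, the model $V[G]$ witnesses $\Con(\ZFC + \ISP(\omega_2, \omega_2))$.

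There is essentially no obstacle beyond checking that the parameters line up: one must verify that $\kappa$ really does become $\omega_2$ in the extension (given by Theorem~\ref{theorem.existence_iteration}), that the substitution $\lambda = \kappa$ collapses the ineffability hypothesis as indicated, and that the iteration's four clauses match those of Theorem~\ref{theorem.ISP_consistency}. Each of these is immediate, so the corollary is a formal consequence.
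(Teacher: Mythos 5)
Your proposal is correct and takes essentially the same route as the paper: apply Theorem~\ref{theorem.existence_iteration} with $\tau = \omega_1$ and feed the resulting iteration into Theorem~\ref{theorem.ISP_consistency} with $\lambda = \kappa$, using $\kappa^{<\kappa} = \kappa$ for inaccessible $\kappa$. The one small imprecision is your claim that ``$\kappa$ is $\kappa$-ineffable'' (every $P_\kappa\kappa$-list has an ineffable branch, with stationarity in $P_\kappa\kappa$) is ``the very definition'' of ineffability --- it is rather a standard equivalence for inaccessible $\kappa$, which the paper's proof covers by additionally citing Remark~\ref{remark.inaccessible->thin}.
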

\begin{proof}
	Taking $\tau = \omega_1$, this follows immediately from Theorem~\ref{theorem.existence_iteration}, Theorem~\ref{theorem.ISP_consistency}, and Remark~\ref{remark.inaccessible->thin}.
\end{proof}

\begin{corollary}\label{corollary.consistency_supercompact}
	If the theory \ZFC\ + ``there exists a supercompact cardinal'' is consistent, then the theory \ZFC\ + ``$\ISP(\omega_2, \lambda)$ holds for every $\lambda \geq \omega_2$'' is consistent.
\end{corollary}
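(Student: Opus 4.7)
The plan is to combine the Mitchell-style iteration of Theorem~\ref{theorem.existence_iteration} with the consistency result of Theorem~\ref{theorem.ISP_consistency}, using $\tau = \omega_1$ throughout. Start in a model $V$ containing a supercompact cardinal $\kappa$. Apply Theorem~\ref{theorem.existence_iteration} to obtain an iteration $\langle \mathbb{P}_\nu\ |\ \nu \leq \kappa \rangle$ satisfying clauses~(i)--(iv), so that $\forces_\kappa \kappa = \omega_2$. Let $G \subset \mathbb{P}_\kappa$ be $V$-generic; the goal is to verify $V[G] \models \ISP(\omega_2, \lambda)$ for every cardinal $\lambda \geq \omega_2$.

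Fix such a $\lambda$ in $V[G]$. First I would observe that $\mathbb{P}_\kappa$ is $\kappa$-cc (by clause~(i) at $\eta = \kappa$), and therefore preserves all cardinals $\geq \kappa$; consequently $\lambda$ is already a cardinal of $V$ with $\lambda \geq \kappa$. Next I would verify the ineffability hypothesis of Theorem~\ref{theorem.ISP_consistency}: since $\kappa$ is supercompact in $V$, Magidor's theorem (Theorem~\ref{theorem.magidor}) shows that $\kappa$ is $\mu$-ineffable in $V$ for every cardinal $\mu \geq \kappa$, and in particular $\kappa$ is $\lambda^{<\kappa}$-ineffable.

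With both the iteration properties and the ineffability hypothesis in hand, a direct application of Theorem~\ref{theorem.ISP_consistency} yields $\forces_\kappa \ISP(\kappa, \lambda)$, i.e. $V[G] \models \ISP(\omega_2, \lambda)$. Since $\lambda$ was an arbitrary cardinal $\geq \omega_2$ in $V[G]$ and the single forcing $\mathbb{P}_\kappa$ works uniformly, this completes the argument.

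There is essentially no obstacle: the corollary is a packaging of the two preceding theorems, and the only substantive checks are the cardinal preservation above $\kappa$ (from $\kappa$-cc) and the passage from supercompactness to $\mu$-ineffability for arbitrarily large $\mu$ (standard, via Theorem~\ref{theorem.magidor}).
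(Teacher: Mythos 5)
Your proof is correct and follows essentially the same route as the paper: apply Theorem~\ref{theorem.existence_iteration} with $\tau=\omega_1$, derive $\lambda^{<\kappa}$-ineffability of the supercompact $\kappa$ from Magidor's characterization (the paper cites its rephrasing, Theorem~\ref{theorem.ITP<->supercompact}, rather than Theorem~\ref{theorem.magidor} directly, but these are equivalent via Remark~\ref{remark.inaccessible->thin}), and conclude by Theorem~\ref{theorem.ISP_consistency}. The extra checks you supply (cardinal preservation above $\kappa$ from the $\kappa$-cc, and the uniformity of the single forcing over all $\lambda$) are correct and merely make explicit what the paper leaves implicit.
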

\begin{proof}
	This follows from Theorems~\ref{theorem.existence_iteration},~\ref{theorem.ISP_consistency}, and~\ref{theorem.ITP<->supercompact}.
\end{proof}
In Corollaries~\ref{corollary.consistency_subtle_ineffable} and~\ref{corollary.consistency_supercompact}, $\omega_2$ only serves as the minimal cardinal for which the theorems hold true.
One can of course take successors of larger regular cardinals instead.

It is worth noting that, when using the Mitchell forcing from Theorem~\ref{theorem.existence_iteration}, Corollary~\ref{corollary.consistency_supercompact} and, when $\cf \lambda \geq \kappa$, Theorem~\ref{theorem.ISP_consistency} were best possible, as shows the next theorem.
Its proof can be found in \cite[Theorem 2.3.5]{diss} or \cite{joint}, where similar ``pull back'' theorems are used in a more general setting.
\begin{theorem}\label{theorem.ITP_consistency_additional}
	Let $V \subset W$ be a pair of models of\/ \ZFC\ that satisfies the $\kappa$-covering property and the $\tau$-approximation property for some $\tau < \kappa$, and suppose $\kappa$ is inaccessible in $V$.
	Then
	\begin{equation*}
		P^W_\kappa \lambda - P^V_\kappa \lambda \in I^W_\IT[\kappa, \lambda],
	\end{equation*}
	which furthermore implies
	\begin{equation*}
		F_\IT^V[\kappa, \lambda] \subset F^W_\IT[\kappa, \lambda].
	\end{equation*}
	So in particular, if $W \models \ITP(\kappa, \lambda)$, then $V \models \ITP(\kappa, \lambda)$.
\end{theorem}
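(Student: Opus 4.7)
The plan is to first establish statement (1), namely $P^W_\kappa\lambda - P^V_\kappa\lambda \in I^W_\IT[\kappa,\lambda]$, and then deduce (2) and (3) as corollaries. Once (1) is available, the second assertion follows by combining it with an analogous extension lemma: any thin $(P^V_\kappa\lambda - A)$-effable $V$-list extends to $W$ by setting $d_a := \emptyset$ for $a \in P^W_\kappa\lambda - V$; this extension remains thin in $W$ (by $\kappa$-covering and inaccessibility of $\kappa$ in $V$) and remains effable there (a putative $W$-ineffable branch $d$ along a $W$-stationary $S \subset P^V_\kappa\lambda - A$ must by $\tau$-approximation be in $V$, and $\kappa$-covering forces $S$ to be $V$-stationary, pulling $d$ back to a $V$-ineffable branch---contradiction). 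The normality of $I^W_\IT$ (Proposition~\ref{prop.I_IT.normal}) then lets us assemble $P^W_\kappa\lambda - A = (P^W_\kappa\lambda - V) \cup (P^V_\kappa\lambda - A)$ inside $I^W_\IT$, so $A \in F^W_\IT$. The third assertion is then the special case $A = P_\kappa\lambda$.

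The core is therefore (1). In $W$, I construct the witnessing list as follows. For each $a \in P^W_\kappa\lambda - V$, the $\tau$-approximation property provides some $z \in P^V_\tau\lambda$ with $a \cap z \notin V$; using a wellordering of $V$ fixed in $W$, let $z_a$ be the minimal such witness, and set $d_a := a \cap z_a$, with $d_a := \emptyset$ when $a \in V$. To verify thinness, take $c \in P^W_\kappa\lambda$ and, by $\kappa$-covering, a $c^* \in P^V_\kappa\lambda$ containing $c$. Whenever $c \subset a$ one has $d_a \cap c = z_a \cap c$, an element of $\{u \cap c : u \in P^V_\tau c^*\} \cup \{\emptyset\}$. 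Inaccessibility of $\kappa$ in $V$ bounds this set in $V$ below $\kappa$, and this bound transfers to $W$ because $\kappa$ remains inaccessible there under the combined $\kappa$-covering and $\tau$-approximation assumptions.

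The nontrivial part is to show the list is $(P^W_\kappa\lambda - V)$-effable. Suppose toward contradiction that some $d \subset \lambda$ and $W$-stationary $S \subset P^W_\kappa\lambda - V$ satisfy $d \cap a = a \cap z_a$ on $S$. If $d \notin V$, choose $z^* \in P^V_\tau\lambda$ with $d \cap z^* \notin V$. On the stationary $S' := \{a \in S : z^* \subset a\}$ one computes $d \cap z^* = a \cap (z_a \cap z^*)$; since $z_a \cap z^*$ takes fewer than $\kappa$ values in the $V$-set $P^V_\tau z^*$, normality of the club filter on $P^W_\kappa\lambda$ gives a stationary $S'' \subset S'$ with $z_a \cap z^* = z'$ constant, and the standard ``$\{a : \alpha \in a\}$ is club'' argument then forces $d \cap z^* = z' \in V$, contradicting the choice of $z^*$. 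Hence $d \in V$. In this case the minimality clause in the definition of $z_a$ is used to refine to a stationary $S_0 \subset S$ on which $z_a = z_0$ is constant; the values $a \cap z_0 = d \cap a$ then range over $P(z_0)^W$, a set of $W$-cardinality below $\kappa$ because $\kappa$ is inaccessible in $W$ and $|z_0| < \tau$. One more normality pigeonhole produces a stationary subset on which $a \cap z_0$ is constant, and the club-meeting argument collapses that constant to $z_0 \in V$, contradicting $a \cap z_0 \notin V$.

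The main technical obstacle is the case $d \in V$ of the effability argument, because $\tau$-approximation does not by itself exclude $a \cap d$ from being outside $V$ for $a \in W - V$. Closing it requires both the minimality built into the definition of $z_a$ (to reduce to a stationary set on which the witness is constant) and the transfer of inaccessibility of $\kappa$ from $V$ to $W$ under the combined cover and approximation hypotheses (to ensure the final pigeonhole within $P(z_0)^W$ stays inside the normal ideal structure of $P^W_\kappa\lambda$).
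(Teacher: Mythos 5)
The paper itself does not prove this theorem; it defers to \cite[Theorem 2.3.5]{diss} and \cite{joint}, so I am comparing your attempt with the argument given there. Your construction of the witnessing list is the right one---for $a \in P^W_\kappa\lambda - P^V_\kappa\lambda$ take $z_a \in P^V_\tau\lambda$ with $a \cap z_a \notin V$ and put $d_a = a \cap z_a$---and both your verification of thinness and your proof that any branch $d$ on a stationary $S$ must lie in $V$ are essentially correct. (For thinness, though, you only need that $\kappa$ remains a regular cardinal in $W$, which follows from $\kappa$-covering; your claim that $\kappa$ stays \emph{inaccessible} in $W$ is false in the intended application, where the Mitchell iteration turns $\kappa$ into $\tau^+$.)

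The case $d \in V$, which you rightly identify as the crux, has a genuine gap. First, you give no argument for stabilizing $z_a$ on a stationary set: there are up to $\lambda^{<\tau}$ candidate witnesses, $z_a$ is not an element of $a$, so neither a pigeonhole below $\kappa$ nor Fodor/normality applies, and minimality of $z_a$ in a fixed well-order does not produce fewer than $\kappa$ possible values. Second, your final pigeonhole through $P(z_0)^W$ needs $\kappa$ to be a strong limit in $W$, which fails exactly where the theorem is applied (after Mitchell forcing $\kappa = \omega_2$, $|z_0| \leq \omega$, and $2^\omega = \omega_2$). The missing step is simpler and is the heart of the argument: since $d \cap a = a \cap z_a \subset z_a$ has size less than $\tau$ for every $a$ in the cofinal set $S$, the branch itself satisfies $|d| < \tau$ (any subset of $d$ of size $\tau$ would have to be contained in some $d \cap a$). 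Hence there is $a \in S$ with $d \subset a$, so $d_a = d \cap a = d$, and $d = d \cap z_a \in V$ because $d \cap z \in V$ for every $z \in P^V_\tau\lambda$ by the first half of your argument; this contradicts $a \cap z_a \notin V$, and the case split on whether $d \in V$ becomes unnecessary. Your reduction of the filter inclusion and of the final clause to statement (1) is fine in outline, but note that a $W$-stationary $S$ need not itself lie in $V$: one must pass to the $V$-definable superset $\{ a\ |\ d \cap a = d_a \}$ and check it is stationary in $V$ using clubs of the form $\Cl_f$ with $f \in V$, and the transfer of thinness of the $V$-list to $W$ also deserves more care than the parenthetical remark you give it.
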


We proceed to give lower bounds on the consistency strength of our combinatorial principles.
We first consider the one cardinal variant, showing Corollary~\ref{corollary.consistency_subtle_ineffable} was best possible.

The next lemma is usually only given in its weaker version where $\kappa$ is required to be weakly compact.
\begin{lemma}\label{lemma.initial_segments_in_L}
	Suppose $\kappa$ is regular uncountable and the tree property holds for $\kappa$.
	Let $A \subset \kappa$.
	If $A \cap \alpha \in L$ for all $\alpha < \kappa$, then $A \in L$.
\end{lemma}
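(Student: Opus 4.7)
The plan is first to show $\kappa$ must be inaccessible in $L$, and then to apply the tree property to a canonical $\kappa$-tree of $L$-subsets of initial segments of $\kappa$.

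For inaccessibility in $L$: since $\kappa$ is regular and uncountable in $V$, any $L$-bijection from some $\mu < \kappa$ onto $\kappa$, or any $L$-cofinal sequence of length $< \kappa$ into $\kappa$, would already lie in $V$ and contradict the hypothesis, so $\kappa$ is a regular uncountable cardinal in $L$ as well. If $\kappa$ were a successor cardinal $(\mu^+)^L$ in $L$, Jensen's $\square_\mu$ in $L$ would produce a special $\kappa$-Aronszajn tree $T_0 \in L$ with a specializing function $f : T_0 \to \mu$ that is injective on chains; since $\mu < \kappa$ persists in $V$ and no chain of size $\kappa$ injects into $\mu$, $T_0$ remains $\kappa$-Aronszajn in $V$, contradicting the tree property at $\kappa$. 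Hence $\kappa$ is inaccessible in $L$.

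Next, working in $L$, I would define the tree $T$ whose $\alpha$-th level is $P^L(\alpha) = P(\alpha) \cap L$, with $B$ below $B'$ iff $B$ is an initial segment of $B'$ (i.e., $B = B' \cap \alpha$ where $B \subseteq \alpha$). Inaccessibility of $\kappa$ in $L$ gives $|P^L(\alpha)| = |\alpha|^{+L} < \kappa$, so $T$ is a $\kappa$-tree in both $L$ and $V$, and the hypothesis means that $\langle A \cap \alpha : \alpha < \kappa \rangle$ is a cofinal branch of $T$ in $V$. To conclude $A \in L$ it suffices to show that every cofinal branch of $T$ in $V$ already lies in $L$. Supposing $A \notin L$, I would use the $<_L$-enumeration $\langle X_\xi : \xi < \kappa^{+L} \rangle$ of $P^L(\kappa)$ in $L$ to thin $T$ into a subtree $T^* \in L$ that (a) still contains the branch induced by $A$, (b) has levels of size $< \kappa$, and (c) has no cofinal branch in $L$. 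Then $T^*$ would be $\kappa$-Aronszajn in $L$; applying the tree property to $T^*$ in $V$ together with fine-structural bookkeeping on the $L$-least $\gamma_\alpha$ with $A \cap \alpha \in L_{\gamma_\alpha}$ would produce the desired contradiction.

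The main obstacle is closing this last step. The tree property is purely an existence statement, so the mere presence of a new $V$-branch through $T$ does not by itself contradict anything. The delicate point is using the fine structure of $L$---canonical enumerations, condensation, and least-witness functions---to translate such a branch into an explicit $L$-definition of $A$, or equivalently, to construct $T^*$ so that the existence of its $V$-cofinal branch, forced by the tree property, immediately contradicts some absoluteness property of $L$. In the weakly compact version of the lemma, $\Pi^1_1$-indescribability performs this translation directly; the content of the present lemma is that the tree property alone suffices, which requires implementing the analogous reflection by hand through the tree $T^*$.
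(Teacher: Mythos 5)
There is a genuine gap here, and it is exactly the one you flag yourself: the plan to derive a contradiction from a $V$-branch through a thinned tree $T^*$ that is Aronszajn in $L$ cannot be closed, because the tree property is an existence statement and is never violated by a tree having \emph{too many} branches. The tree of $L$-subsets of initial segments of $\kappa$ ordered by end-extension is also the wrong object to apply the tree property to, since $A$ already induces a cofinal branch through it by hypothesis, so the tree property tells you nothing you did not assume.

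The paper closes this step by a quite different mechanism. Set $\delta = \kappa + \omega$. By Mitchell's argument, $\kappa$ is inaccessible in $L[A]$ (not just in $L$; your $\square_\mu$/special-Aronszajn-tree argument adapts, but it must be run over $L[A]$). One then applies the tree property not to the tree of initial segments but to the tree of partial $0$--$1$ valuations: enumerate $P^{L[A]}\kappa \cap L_\delta[A]$ in order type $\kappa$ and let the $\alpha$-th level consist of the valuations induced on the first $\alpha$ sets by ordinals $\beta < \kappa$; inaccessibility of $\kappa$ in $L[A]$ keeps the levels of size $< \kappa$, so a cofinal branch exists in $V$ and yields a nonprincipal $\kappa$-complete ultrafilter $U$ on $P^{L[A]}\kappa \cap L_\delta[A]$. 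Taking the internal ultrapower of $L_\delta[A]$ by $U$ gives $j : L_\delta[A] \to L_\gamma[j(A)]$ with critical point $\kappa$, and elementarity transfers the statement $\forall \alpha < \kappa\ (A \cap \alpha \in L)$ to $L_\gamma[j(A)] \models j(A) \cap \kappa \in L$, i.e., $A \in L$. So the ``translation'' you were looking for is performed by the elementarity of an ultrapower embedding obtained from the tree property, not by fine-structural bookkeeping on a thinned subtree; without this (or some equivalent reflection device) your argument does not go through.
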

\begin{proof}
	Let $\delta \coloneqq \kappa + \omega$.
	By~\cite[Proposition~5.3]{mitchell}, $\kappa$ is inaccessible in $L[A]$.
	By the usual argument, one proves there exists a nonprincipal $\kappa$-complete ultrafilter $U$ on $P^{L[A]} \kappa \cap L_\delta[A]$, see~\cite[Proof of Theorem~5.9]{mitchell}.
	Let $M$ be the transitive collapse of the internal ultrapower of $L_\delta[A]$ by $U$, and let $j: L_\delta[A] \to M$ be the corresponding embedding.
	Then $j$ has critical point $\kappa$.
	As $L_\delta[A] \models V = L[A]$, we have $M \models V = L[j(A)]$, so $M = L_\gamma[j(A)]$ for some limit ordinal $\gamma \geq \delta$.
	It holds that $L_\delta[A] \models \forall \alpha < \kappa\ A \cap \alpha \in L$, so $L_\gamma[j(A)] \models \forall \alpha < j(\kappa)\ j(A) \cap \alpha \in L$, so in particular $L_\gamma[j(A)] \models A = j(A) \cap \kappa \in L$.
	Therefore really $A \in L$.
\end{proof}

\begin{theorem}\label{theorem.ITP->ineffable_in_L}
	Suppose $\kappa$ is regular and uncountable.
	If\/ $\ITP(\kappa, \kappa)$ holds, then $L \models \kappa$ is ineffable.
\end{theorem}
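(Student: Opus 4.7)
Fix an ineffability-test sequence $\vec A = \langle A_\alpha\ |\ \alpha<\kappa\rangle \in L$ with $A_\alpha \subset \alpha$; the goal is to produce $d \in L$ and an $L$-stationary $T \subset \kappa$ with $d \cap \alpha = A_\alpha$ for all $\alpha \in T$.

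Since $\ITP(\kappa,\kappa)$ implies $\TP(\kappa,\kappa)$, Lemma~\ref{lemma.initial_segments_in_L} is available, and its proof via~\cite[Proposition~5.3]{mitchell} moreover gives that $\kappa$ is inaccessible in $L$. I would recode $\vec A$ as a $P_\kappa\kappa$-list by
\begin{equation*}
	d_a \coloneqq A_{\sup a} \cap a
\end{equation*}
(which makes sense since $\sup a < \kappa$ by regularity of $\kappa$). I claim this list is thin, with the ordinals $\{\alpha\ |\ \alpha < \kappa\} \subset P_\kappa \kappa$ as the witnessing club (they are cofinal since $a \subset \sup a + 1$, and closed under bounded chains by regularity of $\kappa$): for any ordinal $\gamma < \kappa$,
\begin{equation*}
	\{ d_a \cap \gamma\ |\ \gamma \subset a \in P_\kappa \kappa \} \subset \{ A_\alpha \cap \gamma\ |\ \gamma \leq \alpha < \kappa \} \subset P^L(\gamma),
\end{equation*}
and $|P^L(\gamma)| < \kappa$ because $\kappa$ is inaccessible in $L$.

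Applying $\ITP(\kappa,\kappa)$ would then produce an ineffable branch $d \subset \kappa$ witnessed by a stationary $S \subset P_\kappa\kappa$. Setting $T \coloneqq S \cap \kappa$, the fact that every $\kappa$-club, viewed in $P_\kappa\kappa$, is a $P_\kappa\kappa$-club makes $T$ stationary in $\kappa$; and for each $\alpha \in T$, $d \cap \alpha = d_\alpha = A_\alpha \cap \alpha = A_\alpha$. Unboundedness of $T$ then yields $d \cap \alpha \in L$ for every $\alpha < \kappa$ (pick $\beta \in T$ with $\beta > \alpha$, so $d \cap \alpha = A_\beta \cap \alpha$), whence Lemma~\ref{lemma.initial_segments_in_L} gives $d \in L$. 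Finally, $T = \{\alpha < \kappa\ |\ d \cap \alpha = A_\alpha\}$ is in $L$ and remains stationary in $L$ because every $L$-club is a $V$-club.

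The crux is spotting that the inaccessibility of $\kappa$ in $L$, which comes from the tree property via Mitchell's theorem, is precisely what forces thinness of the natural coded list; the rest --- the translation of stationarity between $\kappa$ and $P_\kappa \kappa$, and the transfer from $V$ to $L$ via Lemma~\ref{lemma.initial_segments_in_L} --- is routine bookkeeping.
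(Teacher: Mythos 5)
Your proof is correct and follows essentially the same route as the paper's: recode the $L$-sequence as a $P_\kappa\kappa$-list, get thinness from the inaccessibility of $\kappa$ in $L$ (via Mitchell's Proposition~5.3), apply $\ITP(\kappa,\kappa)$, transfer the branch into $L$ with Lemma~\ref{lemma.initial_segments_in_L}, and note that stationarity passes down to $L$. The only cosmetic point is that your coding gives $d_\alpha = A_{\sup\alpha}\cap\alpha = A_\alpha$ only for limit $\alpha$, so one should replace $T$ by $T\cap\LimNoArg$ (still stationary) before concluding; the paper sidesteps this by setting $d_a = A_a$ for $a\in\kappa$ and $d_a=\emptyset$ otherwise.
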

\begin{proof}
	Again by~\cite[Proposition~5.3]{mitchell}, $\kappa$ is inaccessible in $L$.

	Let $\langle d_\alpha\ |\ \alpha < \kappa \rangle \in L$.
	Then $\{ d_\alpha \cap \beta\ |\ \alpha \leq \kappa \} \subset P^L\beta$.
	So $\langle d_a\ |\ a \in P_\kappa \lambda \rangle$, where $d_a = \emptyset$ if $a \notin \kappa$, is thin as $|P^L\beta| < \kappa$.
	Thus by $\ITP(\kappa, \kappa)$ there is a $d \subset \kappa$ such that $d_\alpha = d \cap \alpha$ for stationarily many $\alpha < \kappa$.
	This also means $d \cap \gamma \in L$ for all $\gamma < \kappa$.
	Therefore $d \in L$ by Lemma~\ref{lemma.initial_segments_in_L}.
	Since $\{\alpha < \kappa\ |\ d_\alpha = d \cap \alpha \} \in L$ is also stationary in $L$, the proof is finished.
\end{proof}

The best known lower bounds for the consistency strength of $\ITP(\kappa, \lambda)$ are derived from the failure of square.
The following theorem is due to Jensen, Schimmerling, Schindler, and Steel~\cite{jensen.schimmerling.schindler.steel}.
\begin{theorem}\label{theorem.strong_and_woodin}
	Suppose $\lambda \geq \omega_3$ is regular such that $\eta^\omega < \lambda$ for all $\eta < \lambda$.
	If $\non \square(\lambda)$ and $\non \square_\lambda$, then there exists a sharp for a proper class model with a proper class of strong cardinals and a proper class of Woodin cardinals.
\end{theorem}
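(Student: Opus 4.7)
The plan is to argue the contrapositive: assuming there is no sharp for a proper class inner model with a proper class of strong cardinals and a proper class of Woodin cardinals, I would produce either a $\square(\lambda)$- or a $\square_\lambda$-sequence at the given $\lambda$, contradicting the hypothesis. The argument naturally splits into three parts: the construction of a suitable core model $K$ under the anti-large-cardinal hypothesis, the verification that $K$ carries a square sequence at (or just below) $\lambda$, and the transfer of this sequence from $K$ into $V$.

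For the first part, I would invoke the core model induction machinery of Steel, Schindler, Jensen and Schimmerling. Under the failure of the conclusion, this machinery yields a proper-class fine structural extender model $K$ that is fully iterable in $V$, satisfies GCH, and, crucially, satisfies weak covering in the form $(\mu^+)^K = \mu^+$ for all $V$-singular cardinals $\mu$ of sufficiently large cofinality. The setup must be carried out just below the assumed level of large cardinal strength, so that Woodin cardinals in the conclusion are absorbed stage-by-stage through the induction. For the second part, Jensen's fine-structural analysis, extended by Schimmerling and Zeman to extender models carrying strong and Woodin cardinals, yields global $\square$ in $K$: in particular $K \models \square_\mu$ for every uncountable cardinal $\mu$, and a canonical threading gives $K \models \square(\nu)$ for every regular $\nu \geq \omega_3$. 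The arithmetic hypothesis $\eta^\omega < \lambda$ for $\eta < \lambda$ ensures that $\lambda$ is not a cardinal of $K$ collapsed by countable-cofinality effects, so the $K$-square sequence at $\lambda$ is a genuine object indexed by (a tail of) $\LimNoArg \cap \lambda$.

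For the third part, I would use weak covering to transfer a $\square_\lambda$- or $\square(\lambda)$-sequence from $K$ to $V$. The standard technique, going back to Jensen's covering arguments and adapted to extender models, shows that coherent systems of clubs in $K$ remain coherent when restricted to sufficiently many points of $V$, and the arithmetic assumption $\eta^\omega < \lambda$ controls the cofinalities that could obstruct this transfer. The $K$-$\square_\lambda$ sequence then becomes a $V$-$\square_\lambda$ sequence, contradicting $\non \square_\lambda$; if for some reason the $\square_\lambda$-transfer fails, the threaded $\square(\lambda)$ analog still transfers and contradicts $\non \square(\lambda)$. The principal obstacle is unquestionably the first step: securing a core model $K$ at precisely this level of the large cardinal hierarchy requires the full strength of the core model induction developed in Jensen--Schimmerling--Schindler--Steel. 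Once $K$ is in hand, the square-analysis inside $K$ and its covering-based transfer to $V$, while technical, rest on well-established techniques.
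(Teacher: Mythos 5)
The paper does not prove this theorem at all: it is imported verbatim as a known result of Jensen, Schimmerling, Schindler, and Steel (the ``stacking mice'' paper), so there is no in-paper argument to compare yours against. Judged on its own terms, your proposal correctly names the machinery that the cited proof actually uses (core model induction just below a proper class of strong and Woodin cardinals, fine-structural square in the resulting model, and a covering analysis at $\lambda$), but as written it is a roadmap of citations rather than a proof, and its final step contains a genuine error.

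The problem is the transfer of $\square(\lambda)$. Unlike $\square_\lambda$, the principle $\square(\lambda)$ includes the negative clause that no club $D \subset \lambda$ threads the sequence. A coherent sequence that is unthreaded in $K$ can perfectly well acquire a thread in $V$, so a $K$-$\square(\lambda)$-sequence does not ``still transfer'' to a $V$-$\square(\lambda)$-sequence; this is exactly the same phenomenon that makes $\square(\lambda)$ fail upward under forcing that adds threads. In the actual argument the two square principles do not arise as two attempts to transfer the same $K$-object: the dichotomy is whether weak covering holds at the regular cardinal $\lambda$ itself (whether the stack of mice over $K|\lambda$ reaches $\lambda^+$). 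If it does, the Schimmerling--Zeman fine-structural construction along the stack yields a genuine $\square_\lambda$-sequence in $V$ (here there is no anti-threading clause, so coherence plus full domain $\lambda^+$ suffices); if it does not, one builds a $\square(\lambda)$-sequence in $V$ directly from the failure of covering, using the hypothesis $\eta^\omega < \lambda$ to control cofinalities. Your quoted form of weak covering, $(\mu^+)^K = \mu^+$ for singular $\mu$, is also not the relevant one, since $\lambda$ is assumed regular. Without the covering dichotomy at $\lambda$ and a correct treatment of the anti-threading clause, the proposal does not close.
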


\begin{corollary}\label{corollary.consistency-implication}
	The consistency of\/ \ZFC\ + ``there is a $\kappa^+$-ineffable cardinal $\kappa$'' implies the consistency of\/ \ZFC\ + ``there is a proper class of strong cardinals and a proper class of Woodin cardinals.''
\end{corollary}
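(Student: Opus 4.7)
The plan is to verify the hypotheses of Theorem~\ref{theorem.strong_and_woodin} directly in the ground model $V$ with $\lambda = \kappa$, without invoking any forcing. First, I would argue that $\kappa^+$-ineffability forces $\kappa$ to be inaccessible: extending any $P_\kappa\kappa$-list $\langle e_a\rangle$ to a $P_\kappa\kappa^+$-list by setting $d_b \coloneqq e_{b\cap\kappa}$ and projecting an ineffable branch back down shows that $\kappa^+$-ineffability implies $\kappa$-ineffability, which is well known to imply ineffability and hence inaccessibility. Inaccessibility then automatically delivers the cardinal-arithmetic prerequisites of Theorem~\ref{theorem.strong_and_woodin}: since $\kappa$ is a strong limit we have $\omega_2 \leq 2^{\omega_1} < \kappa$, so $\kappa \geq \omega_3$, and $\eta^\omega \leq 2^\eta < \kappa$ for every infinite $\eta < \kappa$.

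Next I would derive $\ITP(\kappa,\kappa^+)$ and $\ITP(\kappa,\kappa)$ from the hypothesis. Since $\kappa$ is inaccessible, Remark~\ref{remark.inaccessible->thin} says every $P_\kappa\kappa^+$-list is thin, so $\kappa^+$-ineffability is precisely $\ITP(\kappa,\kappa^+)$; Proposition~\ref{prop.ITP_nach_unten} then supplies $\ITP(\kappa,\kappa)$. Because $\cf\kappa^+ \geq \kappa$ and $\cf\kappa\geq\kappa$, two applications of Theorem~\ref{theorem.ITP->non_square} give $\non\square_{\cof(<\kappa)}(\kappa,\kappa^+)$ and $\non\square_{\cof(<\kappa)}(\kappa,\kappa)$.

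The remaining step, and the main obstacle, is to translate these cofinality-restricted failures into the unrestricted $\non\square(\kappa)$ and $\non\square_\kappa$ required by Theorem~\ref{theorem.strong_and_woodin}. For $\non\square(\kappa)$ this is automatic, since every $\alpha<\kappa$ has cofinality less than $\kappa$ and so a $\square(\kappa)$-sequence of singletons is directly a $\square_{\cof(<\kappa)}(\kappa,\kappa)$-sequence. For $\non\square_\kappa$ I would take a $\square_\kappa$-sequence $\langle C_\alpha\ |\ \alpha<\kappa^+\rangle$ with $\otp C_\alpha\leq\kappa$, set $\mathcal{C}_\alpha\coloneqq\{C_\alpha\}$ for $\alpha\in\LimNoArg\cap\cof(<\kappa)\cap\kappa^+$, verify clauses (i)--(iii) immediately, and prove (iv) as follows: if $D\subset\kappa^+$ were a threading club enumerated as $\langle d_\xi\ |\ \xi<\kappa^+\rangle$, then $\alpha\coloneqq d_{\kappa+\omega}$ lies in $\Lim D\cap\cof(<\kappa)$ (since $\kappa+\omega$ is a limit of cofinality $\omega$), and the equality $D\cap\alpha=C_\alpha$ forces $\otp C_\alpha = \kappa+\omega>\kappa$, contradicting the $\square_\kappa$ bound. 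With $\non\square(\kappa)$ and $\non\square_\kappa$ established, Theorem~\ref{theorem.strong_and_woodin} applied in $V$ with $\lambda=\kappa$ produces a sharp for a proper class inner model carrying proper classes of both strong and Woodin cardinals, and the existence of that sharp gives the required consistency.
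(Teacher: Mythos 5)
Your proposal is correct and follows essentially the same route as the paper: derive inaccessibility, use Remark~\ref{remark.inaccessible->thin} and Proposition~\ref{prop.ITP_nach_unten} to get $\ITP(\kappa,\kappa^+)$ and $\ITP(\kappa,\kappa)$, feed these into Theorem~\ref{theorem.ITP->non_square}, and conclude via Theorem~\ref{theorem.strong_and_woodin} with $\lambda=\kappa$. The only difference is that you explicitly carry out the translation from $\non\square_{\cof(<\kappa)}(\kappa,\kappa)$ and $\non\square_{\cof(<\kappa)}(\kappa,\kappa^+)$ to $\non\square(\kappa)$ and $\non\square_\kappa$ (your order-type argument for the latter is correct), a step the paper treats as standard via its remark that $\square_{\tau,<\kappa}$ implies $\square(\kappa,\tau^+)$.
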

\begin{proof}
	If $\kappa$ is $\kappa^+$-ineffable, then it is inaccessible and thus $\eta^\omega < \kappa$ for all $\eta < \kappa$.
	By Proposition~\ref{prop.ITP_nach_unten}, $\ITP(\kappa, \kappa)$ holds.
	By Theorem~\ref{theorem.ITP->non_square}, $\ITP(\kappa, \kappa)$ and $\ITP(\kappa, \kappa^+)$ imply $\non \square(\kappa)$ and $\non \square(\kappa^+)$, so by Theorem~\ref{theorem.strong_and_woodin} there is an inner model with a proper class of strong cardinals and a proper class of Woodin cardinals.
\end{proof}

\begin{corollary}
	Suppose $\kappa$ is regular uncountable and $\lambda \geq \omega_3$ is such that $\cf \lambda \geq \kappa$ and $\eta^\omega < \lambda$ for all $\eta < \lambda$.
	If\/ $\ITP(\kappa, \lambda^+)$ holds, then there exists an inner model with a proper class of strong cardinals and a proper class of Woodin cardinals.
\end{corollary}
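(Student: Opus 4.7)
The plan is to reduce to Theorem~\ref{theorem.strong_and_woodin} applied at $\lambda$, which demands $\non \square(\lambda)$ and $\non \square_\lambda$; both will be read off from $\ITP(\kappa, \lambda^+)$ by the same mechanism that underlies Corollary~\ref{corollary.consistency-implication}.

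First, Proposition~\ref{prop.ITP_nach_unten} yields $\ITP(\kappa, \lambda)$ from $\ITP(\kappa, \lambda^+)$. Since $\cf \lambda \geq \kappa$ by hypothesis and $\cf \lambda^+ = \lambda^+ \geq \kappa$ automatically, Theorem~\ref{theorem.ITP->non_square} applies at both $\lambda$ and $\lambda^+$ to give $\non \square_{\cof(<\kappa)}(\kappa, \lambda)$ and $\non \square_{\cof(<\kappa)}(\kappa, \lambda^+)$.

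Second, I translate these negations into $\non \square(\lambda)$ and $\non \square_\lambda$. The principle $\square(\lambda) = \square(2, \lambda)$ is the $|\mathcal{C}_\alpha| = 1$ case of $\square(\kappa, \lambda)$, and $\square_\lambda$ implies $\square(\kappa, \lambda^+)$ by the remark immediately following the definition of $\square_E(\kappa, \lambda)$. So it suffices to observe that $\square(\kappa, \mu)$ implies $\square_{\cof(<\kappa)}(\kappa, \mu)$ for $\mu \in \{\lambda, \lambda^+\}$: this is a plain restriction of the indexing set, and the no-thread clause transfers because for every $\delta \in \Lim D$ with $\cf \delta \geq \kappa$ the set $\Lim D \cap \delta \cap \cof \omega$ is cofinal in $\delta$ (using $\kappa > \omega$), so coherence on this cofinal subset forces $D \cap \delta$ to coincide with an element of $\mathcal{C}_\delta$ in the unrestricted sequence.

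Finally, Theorem~\ref{theorem.strong_and_woodin} applies at $\lambda$: the hypotheses $\lambda \geq \omega_3$ and $\eta^\omega < \lambda$ for every $\eta < \lambda$ are given in the corollary, and the conclusion provides the sought inner model (in fact a sharp for such a model). The main obstacle is simply the bookkeeping in the second step---verifying that the no-thread clause of a $\square(\kappa, \mu)$-sequence survives restriction to $\cof(<\kappa)$---which is elementary given the uncountability of $\kappa$.
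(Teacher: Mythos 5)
Your argument is correct and is essentially the paper's own proof, which consists of exactly the citation chain you use (Proposition~\ref{prop.ITP_nach_unten}, Theorem~\ref{theorem.ITP->non_square}, Theorem~\ref{theorem.strong_and_woodin}); you have merely made explicit the translation from $\non\square_{\cof(<\kappa)}(\kappa,\lambda)$ and $\non\square_{\cof(<\kappa)}(\kappa,\lambda^+)$ to the hypotheses $\non\square(\lambda)$ and $\non\square_\lambda$ of Theorem~\ref{theorem.strong_and_woodin}. One caveat: your intermediate claim that $\square(\kappa,\mu)$ implies $\square_{\cof(<\kappa)}(\kappa,\mu)$ is stated in more generality than your justification supports, since the step ``coherence on a cofinal set forces $D\cap\delta$ to coincide with an element of $\mathcal{C}_\delta$'' needs $C\cap\beta=D\cap\beta$ for cofinally many $\beta\in\Lim C\cap\Lim D\cap\delta$, and from $C\cap\beta,D\cap\beta\in\mathcal{C}_\beta$ this only follows when $|\mathcal{C}_\beta|=1$; for general $\square(\kappa,\mu)$-sequences a thread of the restricted sequence need not thread the full one. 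Since the two instances you actually invoke are singleton sequences (a $\square(2,\lambda)$-sequence, and the sequence on $\lambda^+$ arising from $\square_\lambda$, where the order-type bound $\otp C_\alpha\leq\lambda$ alternatively rules out any thread of the restriction outright), the argument goes through as applied, but the claim should be stated for $\square(2,\mu)$ rather than $\square(\kappa,\mu)$. Finally, note that Theorem~\ref{theorem.strong_and_woodin} also assumes $\lambda$ is regular, which is not listed among the corollary's hypotheses (only $\cf\lambda\geq\kappa$); this gap is inherited from the paper's own statement and is not introduced by your proof.
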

\begin{proof}
	This follows from Proposition~\ref{prop.ITP_nach_unten}, Theorem~\ref{theorem.ITP->non_square}, and Theorem~\ref{theorem.strong_and_woodin}.
\end{proof}

\section{Conclusion}
The reader will have noted that one could also define principles corresponding to $\lambda$-almost ineffability.
However, by~\cite{carr} $\lambda$-ineffability and $\lambda$-almost ineffability both characterize supercompactness, so that considering these principles does not seem to give any new insights.

The main motivation behind the principles we considered is of course the quest for an inner model for a supercompact cardinal.
So far the most interesting applications of the principles can be found in~\cite{joint}, which shows the following.
Suppose $\kappa$ is an inaccessible cardinal and $\mathbb{P}$ is an iteration of forcings of size less than $\kappa$ that takes direct limits stationarily often.
If $\mathbb{P}$ forces \PFA\ and $\kappa = \omega_2$, then $\kappa$ is strongly compact.
If $\mathbb{P}$ is additionally required to be proper, then $\kappa$ is necessarily supercompact.
As this is the only known means of constructing models of \PFA\ from large cardinal assumptions, it gives strong heuristic evidence on the lower bound of the consistency strength of \PFA.

\ifthenelse{\boolean{usemicrotype}}{\microtypesetup{spacing=false}}{}
\bibliographystyle{elsarticle-num}
\bibliography{article}

\end{document}